%%%%%%%%%%%%%%%%%%%%%%%%%%%%%%%%%%%%%%%%%%%%%%%%%%%%%%%%%%%%%%%%%%%%%
%%                                                                 %%
%% Please do not use \input{...} to include other tex files.       %%
%% Submit your LaTeX manuscript as one .tex document.              %%
%%                                                                 %%
%% All additional figures and files should be attached             %%
%% separately and not embedded in the \TeX\ document itself.       %%
%%                                                                 %%
%%%%%%%%%%%%%%%%%%%%%%%%%%%%%%%%%%%%%%%%%%%%%%%%%%%%%%%%%%%%%%%%%%%%%

%%\documentclass[referee,sn-basic]{sn-jnl}% referee option is meant for double line spacing

%%=======================================================%%
%% to print line numbers in the margin use lineno option %%
%%=======================================================%%

%%\documentclass[lineno,sn-basic]{sn-jnl}% Basic Springer Nature Reference Style/Chemistry Reference Style

%%======================================================%%
%% to compile with pdflatex/xelatex use pdflatex option %%
%%======================================================%%

%%\documentclass[pdflatex,sn-basic]{sn-jnl}% Basic Springer Nature Reference Style/Chemistry Reference Style

%%\documentclass[sn-basic]{sn-jnl}% Basic Springer Nature Reference Style/Chemistry Reference Style
\documentclass[sn-mathphys]{sn-jnl}% Math and Physical Sciences Reference Style
\usepackage{amsmath, amsthm, amsfonts, amssymb, thmtools}
%%<additional latex packages if required can be included here>
%%%%
\usepackage{hyperref}
%%%%%=============================================================================%%%%
%%%%  Remarks: This template is provided to aid authors with the preparation
%%%%  of original research articles intended for submission to journals published 
%%%%  by Springer Nature. The guidance has been prepared in partnership with 
%%%%  production teams to conform to Springer Nature technical requirements. 
%%%%  Editorial and presentation requirements differ among journal portfolios and 
%%%%  research disciplines. You may find sections in this template are irrelevant 
%%%%  to your work and are empowered to omit any such section if allowed by the 
%%%%  journal you intend to submit to. The submission guidelines and policies 
%%%%  of the journal take precedence. A detailed User Manual is available in the 
%%%%  template package for technical guidance.
%%%%%=============================================================================%%%%

\jyear{2022}%

%% as per the requirement new theorem styles can be included as shown below
\theoremstyle{thmstyleone}%
\newtheorem{theorem}{Theorem}%  meant for continuous numbers
%%\newtheorem{theorem}{Theorem}[section]% meant for sectionwise numbers
%% optional argument [theorem] produces theorem numbering sequence instead of independent numbers for Proposition
% 
%%\newtheorem{proposition}{Proposition}% to get separate numbers for theorem and proposition etc.

\newtheorem{lemma}[theorem]{Lemma}

\numberwithin{equation}{section}

\theoremstyle{thmstyletwo}%
\newtheorem{example}{Example}%
\newtheorem{remark}{Remark}%

\theoremstyle{thmstylethree}%

\newcommand{\mD}{\ensuremath{\mathbb{D}}}

\raggedbottom
%%\unnumbered% uncomment this for unnumbered level heads

\begin{document}

\title[Normality and partial sharing of sets with differential polynomials]{Normality through partial sharing of sets with differential polynomials}

%%=============================================================%%
%% Prefix	-> \pfx{Dr}
%% GivenName	-> \fnm{Joergen W.}
%% Particle	-> \spfx{van der} -> surname prefix
%% FamilyName	-> \sur{Ploeg}
%% Suffix	-> \sfx{IV}
%% NatureName	-> \tanm{Poet Laureate} -> Title after name
%% Degrees	-> \dgr{MSc, PhD}
%% \author*[1,2]{\pfx{Dr} \fnm{Joergen W.} \spfx{van der} \sur{Ploeg} \sfx{IV} \tanm{Poet Laureate} 
%%                 \dgr{MSc, PhD}}\email{iauthor@gmail.com}
%%=============================================================%%

\author[1]{\fnm{Kuldeep Singh} \sur{Charak}}\email{kscharak7@rediffmail.com}

\author*[1]{\fnm{Nikhil} \sur{Bharti}}\email{nikhilbharti94@gmail.com}
%\equalcont{These authors contributed equally to this work.}

\author[2]{\fnm{Anil} \sur{Singh}}\email{anilmanhasfeb90@gmail.com}
%\equalcont{These authors contributed equally to this work.}

\affil[1]{\orgdiv{Department of Mathematics}, \orgname{University of Jammu}, \orgaddress{\city{Jammu}, \postcode{180006}, \state{Jammu and Kashmir}, \country{India}}}

\affil[2]{\orgdiv{Department of Mathematics}, \orgname{Maulana Azad Memorial College}, \orgaddress{\city{Jammu}, \postcode{180006}, \state{Jammu and Kashmir}, \country{India}}}

%\affil[3]{\orgdiv{Department}, \orgname{Organization}, \orgaddress{\street{Street}, \city{City}, \postcode{610101}, \state{State}, \country{Country}}}

%%==================================%%
%% sample for unstructured abstract %%
%%==================================%%

\abstract{This article aims at finding  sufficient conditions for a family of meromorphic functions to be normal by involving partial sharing of sets with differential polynomials. Moreover, corresponding results for normal meromorphic functions are also established which improve and generalize many known results.}

\keywords{Normal families, normal functions, partially shared sets, meromorphic functions, differential polynomials}

%%\pacs[JEL Classification]{D8, H51}

\pacs[MSC Classification]{Primary 30D45, 34M05; Secondary 30D30, 30D35}

\maketitle

\section{Introduction and Statement of Results}
%We set the following notations throughout the paper:
%\begin{itemize}
%\item $\mathcal{H}(D):$ the class of all holomorphic %functions on the domain $D\subseteq\mathbb{C}$
%\item $\mathcal{M}(D):$ the class of all meromorphic functions on the domain $D\subseteq\mathbb{C}$
%\item $\mathbb{C}_{\infty}:$ the extended complex plane
%\item $\mathbb{D}:$ the open unit disk in $\mathbb{C}.$ 
%\item $D(a,r):$ the disk with center $a$ and radius $r.$
%\end{itemize}
A family $\mathcal{F}$ of meromorphic functions in a domain $D\subset \mathbb{C}$ is said to be {\it normal} in $D$ if every sequence of functions in $\mathcal{F}$ contains  a subsequence which converges locally uniformly in $D$ with respect to the spherical metric to a function which is either meromorphic in $D$ or identically $\infty.$ Normality of a family of holomorphic functions in $D$ is defined analogously with respect to the Euclidean metric (see \cite{schiff, zalcman-1, zalcman-2}). On the other hand, a meromorphic function $f$ in the open unit disk $\mathbb{D}$ is said to be normal  if the family $\mathcal{G}=\left\{f\circ \psi: \psi\in \mathcal{A}(\mathbb{D})\right\}$ forms a normal family in $\mathbb{D},$ where $\mathcal{A}(\mathbb{D})$ is the group of conformal automorphisms of $\mathbb{D}.$ The study of normal functions was initiated implicitly by Noshiro \cite{noshiro} in $1939.$ Subsequently, Lehto and Virtanen \cite{lehto} extended the definition of normal meromorphic functions in $\mathbb{D}$ to arbitrary simply connected domains. Following the work of Yoshida \cite{yoshida}, Noshiro \cite[Theorem 1]{noshiro} gave a fundamental  criterion for normal functions in terms of growth of their spherical derivatives, namely that {\it a meromorphic function $f$ in the open unit disk $\mathbb{D}$ is normal if and only if $\sup\limits_{z\in\mathbb{D}}(1-\lvert z\rvert^2)f^{\#}(z)<\infty.$} Owing to its immense pertinency in geometric function theory, particularly in analyzing the boundary behaviour of a meromorphic function, many authors have explored properties of normal meromorphic functions from the geometric as well as the analytic point of view (see, for example \cite{anderson, pommerenke, yamashita}). 

\medskip

For the sake of convenience, we shall denote by $\mathcal{H}(D)$ and $\mathcal{M}(D)$ respectively the classes of all holomorphic and meromorphic functions in a domain $D\subseteq\mathbb{C},$ $\mathbb{C}_{\infty}=\mathbb{C}\cup\left\{\infty\right\}$ shall denote the extended complex plane, by $\mathbb{D}$ we shall denote the open unit disk and finally $D(a,r)$ shall denote the open disk with center $a$ and radius $r.$

\medskip

Let $f,g\in\mathcal{M}(D)$ and $S$ be a subset of $\mathbb{C}.$ Then we say that $f$ and $g$ share the set $S$ in $D$ if $E(f, S)=E(g, S),$ where $$E(f, S):=\bigcup\limits_{s\in S}\left\{z\in D: f(z)=s,~\mbox{ignoring multiplicity}\right\}.$$

More generally, for any $S_1, S_2\subseteq\mathbb{C}$ and $f,g\in\mathcal{M}(D),$ the pair $(f,g)$ is said to share the pair $(S_1, S_2)$ if $E(f, S_1)=E(g, S_2).$ In this case, we write $f(z)\in S_1\Leftrightarrow g(z)\in S_2.$ However, if either $E(f, S_1)\subseteq E(g, S_2)$ or $E(f, S_1)\supseteq E(g, S_2),$ then we say that $(f,g)$ share $(S_1, S_2)$ partially and we write $f(z)\in S_1\Rightarrow (\mbox{or} \Leftarrow)~g(z)\in S_2.$ In particular, if $f(z)=a\Leftrightarrow g(z)=a$ (respectively, $f(z)=a\Rightarrow g(z)=a$) for some value $a\in\mathbb{C}_{\infty},$ then $a$ is said to be a shared value (respectively, partially shared value) of $f$ and $g$ in $D.$

\medskip

Our objectives here are to obtain normality criteria for a family of meromorphic functions by involving partial sharing of sets with differential polynomials, and to find the corresponding criteria for normal meromorphic functions. Before we state our main results, we give some necessary background.

\medskip

Although the correspondence between normality and shared values was long established in $1992$ by Schwick (see \cite[Theorem 1]{schwick}), yet the first connection between normality and shared sets was established by Liu and Pang \cite[Theorem 1]{liu-pang} in $2007.$ Precisely, they proved:

\begin{theorem}
Let $\mathcal{F}\subset\mathcal{M}(D)$ and $S=\left\{a_1, a_2, a_3\right\}$ be a set in $\mathbb{C}.$ If for each $f\in\mathcal{F},$ $f(z)\in S\Leftrightarrow f'(z)\in ,$ then $\mathcal{F}$ is normal in $D.$
\end{theorem}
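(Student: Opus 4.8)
The plan is to argue by contradiction, using Zalcman's rescaling lemma to produce a nonconstant meromorphic limit function on $\mathbb{C}$ and then showing that this limit must omit the three values of $S$, which is impossible by Picard's theorem.

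First I would suppose $\mathcal{F}$ is not normal at some $z_0\in D$ and apply Zalcman's lemma: there are $f_n\in\mathcal{F}$, points $z_n\to z_0$ and numbers $\rho_n\to 0^{+}$ such that
$$g_n(\zeta):=f_n(z_n+\rho_n\zeta)\longrightarrow g(\zeta)$$
locally uniformly on $\mathbb{C}$ with respect to the spherical metric, where $g$ is a nonconstant meromorphic function on $\mathbb{C}$. Since $g_n'(\zeta)=\rho_n f_n'(z_n+\rho_n\zeta)$, the hypothesis $f_n(z)\in S\Leftrightarrow f_n'(z)\in S$ transcribes, in the $\zeta$-variable, to
$$g_n(\zeta)\in S\iff g_n'(\zeta)\in\rho_n S,\qquad \rho_n S:=\{\rho_n a_1,\rho_n a_2,\rho_n a_3\},$$
where $\rho_n S\to\{0\}$, while $g_n'\to g'$ locally uniformly off the poles of $g$.

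The key step would be to show that $g$ omits each of $a_1,a_2,a_3$. Assume not, say $g(\zeta_0)=a_j$. A Hurwitz argument first forces $g'(\zeta_0)=0$: zeros $\eta_n\to\zeta_0$ of $g_n-a_j$ have $g_n(\eta_n)\in S$, hence $g_n'(\eta_n)\in\rho_n S\to 0$, whereas $g_n'(\eta_n)\to g'(\zeta_0)$. So $g'$ has a zero of some order $m\geq 1$ at $\zeta_0$; in particular $g$ is not linear, so $g''\not\equiv 0$, and $g-a_j$, $g''$ vanish to orders $m+1$, $m-1$ at $\zeta_0$. Fix a small disk $D_0$ about $\zeta_0$ whose closure carries no pole of $g$ and no zero of $g'$, $g''$ or $g-a_j$ besides $\zeta_0$. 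For $n$ large, Hurwitz/Rouch\'e give (counting multiplicities in $D_0$): $g_n-a_j$ has $m+1$ zeros, each $g_n'-\rho_n a_k$ has $m$ zeros, and $g_n''$ has $m-1$ zeros. By the rescaled hypothesis every zero $w$ of $g_n'-\rho_n a_k$ satisfies $g_n(w)\in S$, and since $g_n\to g$ uniformly on $\overline{D_0}$ and $g\approx a_j$ there, in fact $g_n(w)=a_j$ for all large $n$. Hence all zeros of $g_n'-\rho_n a_1$, $g_n'-\rho_n a_2$, $g_n'-\rho_n a_3$ — pairwise disjoint because $a_1,a_2,a_3$ are distinct — are zeros of $g_n-a_j$. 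If $0\notin S$ each is a \emph{simple} zero of $g_n-a_j$, so writing $n_k$ for the number of distinct zeros of $g_n'-\rho_n a_k$ in $D_0$ one has $n_1+n_2+n_3\leq m+1$; on the other hand a multiple solution of $g_n'=\rho_n a_k$ is a zero of $g_n''$, so $\sum_k(m-n_k)\leq m-1$, i.e.\ $n_1+n_2+n_3\geq 2m+1$, which together force $m\leq 0$, contradicting $m\geq 1$. If $0\in S$, say $a_1=0$, then each zero of $g_n'$ is a zero of $g_n-a_j$ of multiplicity $\geq 2$, and counting these together with the simple zeros coming from $g_n'-\rho_n a_2$, $g_n'-\rho_n a_3$ again overruns the $m+1$ zeros of $g_n-a_j$ in $D_0$. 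Either way we reach a contradiction, so $g$ omits $S$.

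Finally, a nonconstant meromorphic function on $\mathbb{C}$ cannot omit three distinct finite values, by Picard's theorem; this contradiction establishes that $\mathcal{F}$ is normal in $D$. I expect the main obstacle to be the multiplicity bookkeeping in the middle step — aligning the solutions of $g_n'=\rho_n a_k$ with the zeros of $g_n-a_j$ and using $g_n''$ to absorb multiple solutions, plus handling the case $0\in S$ separately. This is also precisely where the hypothesis that $S$ has \emph{three} elements is used: with only two values one would get merely $n_1+n_2\leq m+1$ against $n_1+n_2\geq m+1$, and no contradiction.
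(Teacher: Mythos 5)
Your argument is correct, and it is worth noting at the outset that the paper offers no proof of this statement: it is quoted verbatim (with the typo ``$f'(z)\in$'' for ``$f'(z)\in S$'') from Liu and Pang, so the only comparison available is with the method the authors deploy for their own theorems. That method is genuinely different: they use the Zalcman--Pang lemma with a rescaling exponent $\alpha$ tied to an assumed multiplicity of the zeros of $f-a$, perform a second rescaling, and reach a contradiction by exhibiting an infinite subset of the finite target set, with Nevanlinna-theoretic lemmas (Doeringer, Grahl, Yi) ruling out degenerate limits. Your route is more elementary and self-contained: after the basic Zalcman rescaling you show that every $a_j$-point of the limit $g$ forces $g'(\zeta_0)=0$, and then the local count via the argument principle --- $m+1$ zeros of $g_n-a_j$ in $D_0$ against at least $2m+1$ distinct solutions of $g_n'=\rho_na_k$, $k=1,2,3$, with $g_n''$ absorbing the multiple solutions --- shows $g$ omits all of $S$, contradicting Picard. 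This is essentially the classical Schwick/Pang--Zalcman mechanism, it uses both directions of the sharing hypothesis in the right places, and your closing remark correctly identifies why $\#(S)=3$ is needed. Two details to tighten in a final write-up: shrink $D_0$ once more so that $g_n'\to g'$ and $g_n''\to g''$ uniformly on $\overline{D_0}$ and so that $\sup_{\overline{D_0}}\lvert g-a_j\rvert<\tfrac12\min_{i\ne j}\lvert a_i-a_j\rvert$ (this is what upgrades $g_n(w)\in S$ to $g_n(w)=a_j$); and in the case $0\in S$ record explicitly that a zero of $g_n'$ of multiplicity $\mu$ lying on $g_n^{-1}(a_j)$ is a zero of $g_n-a_j$ of multiplicity $\mu+1$, so that $2n_1+n_2+n_3\le m+1$ together with $n_1+n_2+n_3\ge 2m+1$ yields $n_1\le -m<0$, the desired contradiction.
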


Other results related to normal families and shared sets can be found in \cite{cai, chang-wang, fang-zalcman, li}. It is noteworthy to mention that due to the close relationship between normal families and normal functions, one anticipates a criterion for normal functions corresponding to a known criterion of normal families and vice versa. However, this correlation does not always hold, for instance see \cite[p. 193]{hayman-storvick}.

\medskip

In $2016,$ Xu and Qiu \cite[Theorem 1.2]{xu-qiu} considered the analogous problem of sharing of sets with derivatives for normal functions and obtained the following:

\begin{theorem}
Let $f\in\mathcal{M}(\mathbb{D})$ and let $S_1=\left\{a_1, a_2, a_3\right\}$ and $S_2=\left\{b_1,b_2,b_3\right\}$ be two set in $\mathbb{C}.$ If $f(z)\in S_1\Leftrightarrow f'(z)\in S_2$ in $\mathbb{D},$ then $f$ is a normal function.
\end{theorem}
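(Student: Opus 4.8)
The plan is to establish the Noshiro criterion recalled above, i.e. that $M:=\sup_{z\in\mathbb{D}}(1-|z|^2)f^{\#}(z)<\infty$, which yields normality. Suppose instead $M=\infty$ and pick $z_n\in\mathbb{D}$ with $|z_n|\to1$ and $(1-|z_n|^2)f^{\#}(z_n)\to\infty$; then also $r_nf^{\#}(z_n)\to\infty$, where $r_n:=1-|z_n|\to0$. The functions $F_n(\zeta):=f(z_n+r_n\zeta)$ are meromorphic on $\mathbb{D}$ with $F_n^{\#}(0)=r_nf^{\#}(z_n)\to\infty$, so $\{F_n\}$ fails Marty's criterion at $0$; and since $F_n'(\zeta)=r_nf'(z_n+r_n\zeta)$, the hypothesis becomes
\[
F_n(\zeta)\in S_1\ \Longleftrightarrow\ F_n'(\zeta)\in r_nS_2 .
\]
Thus passing to a limit will have to cope with the ``derivative set'' $r_nS_2$ collapsing to $\{0\}$.

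Next I would apply the Zalcman--Pang rescaling lemma to the non-normal sequence $\{F_n\}$: passing to a subsequence there are $\zeta_n\to0$ and $\rho_n\downarrow0$ with $G_n(\zeta):=F_n(\zeta_n+\rho_n\zeta)=f(w_n+\sigma_n\zeta)\to G(\zeta)$ spherically and locally uniformly on $\mathbb{C}$, where $w_n\to\partial\mathbb{D}$, $\sigma_n=r_n\rho_n\to0$, and $G$ is a non-constant meromorphic function on $\mathbb{C}$ with $G^{\#}\le G^{\#}(0)=1$; in particular $G$ has finite order. The delicate step is transferring the sharing condition. From $G_n'(\zeta)=\sigma_nf'(w_n+\sigma_n\zeta)$ one gets $G_n(\zeta)\in S_1\Leftrightarrow G_n'(\zeta)\in\sigma_nS_2$, and a Hurwitz/Rouch\'e argument — carried out at the $a_i$-points of $G$, at the zeros of $G'$ (comparing $G_n'$ with the small values $\sigma_nb_j$), and noting that poles are harmless since there $G'=\infty\notin S_2$ — should give the limiting relation
\[
G(\zeta)\in\{a_1,a_2,a_3\}\ \Longleftrightarrow\ G'(\zeta)=0 ;
\]
equivalently, each of $G-a_1,G-a_2,G-a_3$ has only multiple zeros, and every zero of $G'$ is an $a_i$-point of $G$.

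It remains to contradict the existence of such a $G$, and this is the step I expect to be the main obstacle, since the displayed relation by itself does not force constancy (a suitably normalized Weierstrass $\wp$-function satisfies it), so one must feed back the extra information coming from the construction. When $f$ is holomorphic near the $w_n$, $G$ is entire, and the Second Main Theorem applied to $G$ with $a_1,a_2,a_3,\infty$, combined with $\overline N\!\left(r,\tfrac1{G-a_i}\right)\le\tfrac12 N\!\left(r,\tfrac1{G-a_i}\right)\le\tfrac12 T(r,G)+O(1)$, gives $2T(r,G)\le\tfrac32 T(r,G)+S(r,G)$, so $G$ is a polynomial, after which an elementary count of critical points shows no non-constant polynomial can meet the relation — a contradiction. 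The remaining case, in which poles of $f$ cluster at the $w_n$, is the crux: here I would re-centre the rescaling at such poles and use the orders of these poles, the multiplicities of the $a_i$-points of $G_n$, and the precise rate at which these split as $G_n\to G$, so as to pin down which $b_j\in S_2$ is attained by $G_n'$ at each split $a_i$-point; the resulting rigidity is incompatible with $S_1$ and $S_2$ being fixed finite sets, hence contradicts the non-constancy of $G$. Either way $M<\infty$, so $f$ is a normal function. The routine matters — that spherical convergence entails convergence of derivatives off the poles, and the bookkeeping in the Hurwitz and Rouch\'e steps — I would defer to the write-up.
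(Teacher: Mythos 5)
First, a bookkeeping point: the paper does not prove this statement. It is Theorem 2 of the text, quoted from Xu and Qiu with a citation, so there is no in‑paper proof to compare against; the closest relatives are the proofs of Theorems \ref{thm:2} and \ref{thm:5}. Judged on its own terms, your proposal is correct up to and including the transfer of the hypothesis to the rescaled limit: the reduction via Marty and the Zalcman--Pang (equivalently Lohwater--Pommerenke) rescaling, the relation $G_n(\zeta)\in S_1\Leftrightarrow G_n'(\zeta)\in\sigma_nS_2$, the Hurwitz argument yielding $G(\zeta)\in S_1\Leftrightarrow G'(\zeta)=0$ at finite points, and the entire case are all fine (for entire $G$ you do not even need the Second Main Theorem: three totally ramified finite values together with the omitted value $\infty$ already violate Nevanlinna's bound $\sum_j(1-1/m_j)\le 2$, i.e. Lemma \ref{neva}).

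The genuine gap is exactly where you flag it: the meromorphic case, which is the entire substance of the theorem. As you note, $\wp$ satisfies $G\in\{e_1,e_2,e_3\}\Leftrightarrow G'=0$ and has bounded spherical derivative, so nothing you have extracted so far is contradictory. The paragraph offered in place of an argument (``re‑centre the rescaling at such poles \ldots the resulting rigidity is incompatible with $S_1$ and $S_2$ being fixed finite sets'') is a plan, not a proof, and it points in the wrong direction: the missing information does not live at the poles, and the eventual contradiction is not with the finiteness of the sets. What does close the case is a local root count at a single ramification point, using both halves of the sharing relation \emph{before} passing to the limit. Pick $\zeta_0$ with $G(\zeta_0)=a_i$ of multiplicity $m\ge 2$, so $\operatorname{ord}_{\zeta_0}G'=m-1$ and $\operatorname{ord}_{\zeta_0}G''=m-2$, and a disk $D(\zeta_0,\epsilon)$ on which $(G-a_i)\,G'\,G''$ vanishes only at $\zeta_0$. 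For each $j$, $G_n'-\sigma_nb_j$ has exactly $m-1$ roots (with multiplicity) in the disk for large $n$; every such root is a point where $f'=b_j\in S_2$, hence $f\in S_1$, hence $G_n=a_i$ there (the other points of $S_1$ being out of reach by uniform convergence), and for $b_j\neq0$ it is a \emph{simple} zero of $G_n-a_i$. Roots belonging to different $j$ are distinct, so $m\ge 3$; and since $G_n-a_i$ has only $m$ zeros in the disk, at least $3(m-1)-m=2m-3$ units of multiplicity must sit in multiple roots of the functions $G_n'-\sigma_nb_j$, each such unit producing a zero of $G_n''$ --- of which there are only $m-2$ in the disk. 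Thus $2m-3\le m-2$, contradicting $m\ge3$ (the case $0\in S_2$ needs a minor adjustment). Something of this kind --- or the device used in the paper's own proofs, a second rescaling $(g_n-a_i)/\rho_n$ that restores the derivative to $f'$ itself so that Picard's theorem forces infinitely many distinct values of $f$ into a finite set --- is what your write‑up must supply; as it stands, the central step is missing.
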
 

In $2019,$ Chen and Tong \cite[Theorem 1]{chen-tong} considered sharing of sets with higher derivatives and proved the following criterion for normal functions:

\begin{theorem}\label{thm:tong}
Let $f\in\mathcal{M}(\mathbb{D}),$ $S_1=\left\{a_1, a_2, a_3\right\}$ and  $S_2=\left\{b_1,b_2,b_3\right\}$ be two finite subsets of $\mathbb{C}$ and $k\in\mathbb{Z}^{+}.$ If $$f(z)\in S_1\Leftrightarrow f^{(k)}(z)\in S_2,$$ and $$\max\limits_{0\leq i\leq k-1}\lvert f^{(i)}(z)\rvert=0 \mbox{ whenever }~f(z)\in S_1$$ hold in $\mathbb{D},$ then $f$ is a normal function.
\end{theorem}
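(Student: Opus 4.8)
The plan is to argue by contradiction using the Zalcman–Pang rescaling lemma, which is the standard engine for transferring normality-type statements about a single function on $\mathbb{D}$ into statements on $\mathbb{C}$. Suppose $f$ is not normal. By Noshiro's criterion quoted above, $\sup_{z\in\mathbb{D}}(1-|z|^2)f^{\#}(z)=\infty$, so there exist points $z_j\in\mathbb{D}$, positive numbers $\rho_j\to 0^+$, and (after passing to a subsequence) we can apply the Zalcman–Pang lemma with an appropriate choice of exponent. Because the hypothesis involves the $k$-th derivative, the correct normalization is $g_j(\zeta):=\rho_j^{-k}\,f(z_j+\rho_j\zeta)$, and the lemma guarantees that, after relabelling, $g_j$ converges locally uniformly in $\mathbb{C}$ (spherically) to a nonconstant meromorphic function $g$ on $\mathbb{C}$ whose spherical derivative is bounded, in particular $g$ has finite order. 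Note $g_j^{(k)}(\zeta)=f^{(k)}(z_j+\rho_j\zeta)$, so $g_j^{(k)}\to g^{(k)}$ away from poles of $g$.

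The next step is to see what the set-sharing hypothesis forces on the limit function $g$. Fix $\zeta_0\in\mathbb{C}$ with $g(\zeta_0)\in S_1$, say $g(\zeta_0)=a_i$ and $g$ finite near $\zeta_0$; by Hurwitz's theorem there are $\zeta_j\to\zeta_0$ with $g_j(\zeta_j)=a_i$, i.e. $f(z_j+\rho_j\zeta_j)=a_i$. Then the hypothesis $f(z)\in S_1\Rightarrow f^{(k)}(z)\in S_2$ gives $f^{(k)}(z_j+\rho_j\zeta_j)=g_j^{(k)}(\zeta_j)\in S_2=\{b_1,b_2,b_3\}$, and passing to the limit, $g^{(k)}(\zeta_0)\in S_2$ (the second, vanishing-derivatives hypothesis is what rules out $\zeta_0$ being a pole of $g$ sneaking in, and also controls the lower-order terms $g_j^{(i)}$ for $i<k$, forcing them to vanish at such points, hence $g^{(i)}(\zeta_0)=0$ for $0\le i\le k-1$). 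Symmetrically, $g^{(k)}(\zeta_0)\in S_2$ will imply $g(\zeta_0)\in S_1$, so $g(\zeta)\in S_1\Leftrightarrow g^{(k)}(\zeta)\in S_2$ holds on all of $\mathbb{C}$, together with $g^{(i)}$ vanishing at the preimages of $S_1$ for $0\le i\le k-1$.

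Finally I would derive a contradiction by showing that no nonconstant meromorphic function $g$ of finite order on $\mathbb{C}$ can satisfy $g(\zeta)\in S_1\Leftrightarrow g^{(k)}(\zeta)\in S_2$ while all lower derivatives vanish on $g^{-1}(S_1)$. The engine here is Nevanlinna theory: the three elements $a_1,a_2,a_3$ of $S_1$ give, via the second fundamental theorem, a lower bound on $\sum_i \bar N(r,1/(g-a_i))$, while the sharing relation identifies each such preimage (counted appropriately, and with the vanishing of $g',\dots,g^{(k-1)}$ boosting the multiplicity) with a preimage of some $b_\ell$ under $g^{(k)}$; a counting argument on the order of contact, combined with the logarithmic-derivative estimate $m(r,g^{(k)}/g)=S(r,g)$ and the fact that poles of $g$ of order $p$ are poles of $g^{(k)}$ of order $p+k$, squeezes the characteristic functions into an impossible inequality, forcing $g$ to be constant. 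This Picard-type rigidity statement on $\mathbb{C}$ is the heart of the matter, and proving it cleanly — in particular handling the bookkeeping between multiplicities of $g-a_i$ zeros and $g^{(k)}-b_\ell$ zeros, and the role of poles — is the step I expect to be the main obstacle; it should be isolated as a separate lemma about meromorphic functions on $\mathbb{C}$ and very likely parallels the corresponding auxiliary lemma in Chen–Tong \cite{chen-tong}.
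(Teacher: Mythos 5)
This statement is quoted in the paper as Chen--Tong's theorem and is not proved there, so the relevant comparison is with the paper's own proofs of the closely parallel Theorems \ref{thm:2} and \ref{thm:5}. Measured against either the theorem itself or those arguments, your proposal has a genuine gap at its central step. The single global rescaling $g_j(\zeta)=\rho_j^{-k}f(z_j+\rho_j\zeta)$ is not available: the Chen--Gu/Zalcman--Pang lemma (Lemma \ref{lem:2}) with exponent $\alpha=k\ge 1$ requires \emph{all} zeros of $f$ to have multiplicity greater than $k$, whereas the hypothesis only controls multiplicities at points of $E(f,S_1)$, i.e.\ zeros of $f-a_i$, and says nothing about zeros of $f$ itself unless $0\in S_1$ (and even then yields multiplicity $\ge k$ only, which is still not enough). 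Worse, even granting the rescaling, the sharing relation does not transfer to the limit as you claim: with this normalization, $g_j(\zeta_j)=a_i$ means $f(z_j+\rho_j\zeta_j)=\rho_j^{k}a_i$, not $a_i$, and conversely $f(z_j+\rho_j\zeta_j)=a_i\ne 0$ forces $g_j(\zeta_j)=\rho_j^{-k}a_i\to\infty$. So the relation ``$g(\zeta)\in S_1\Leftrightarrow g^{(k)}(\zeta)\in S_2$'' on which your final step depends is never established for $g$; every finite value of $g$ corresponds to $f$-values tending to $0$. On top of this, the concluding Picard-type rigidity statement is only sketched and, as you concede, is the hard part, so the argument is incomplete even modulo the above.

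The route that actually works (and is the one the paper uses for Theorems \ref{thm:2} and \ref{thm:5}) is a \emph{two-stage} rescaling. First apply the un-normalized Lohwater--Pommerenke lemma (Lemma \ref{lem:1}) to get $f(z_n+\rho_n\zeta)\to g$ with $g$ nonconstant; here the sharing hypothesis genuinely passes to statements about $f$ at points $z_n+\rho_n\zeta_n$. If $g-a$ has a zero for some relevant $a$, one performs a \emph{second, local} Zalcman--Pang rescaling of $h_n=(f(z_n+\rho_n\cdot)-a)/\rho_n^{\alpha}$ about that zero, which is legitimate there because the vanishing-derivative hypothesis forces the zeros of $h_n$ near that point to have high multiplicity; the combined scaling is arranged so that the $k$-th derivative of the inner limit $\phi$ equals $\lim f^{(k)}(z_n+\rho_n\hat\zeta_n+\rho_nr_n\xi)$ with no stray power of $\rho_n$. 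Hurwitz plus the sharing hypothesis then produces infinitely many distinct values of $f$ that must lie in the finite set $S_1$ or $S_2$, a contradiction. In the complementary case where the limit omits the relevant value, the reverse implication of the sharing yields ``$h(\zeta)=b_i-a\Rightarrow h^{(k)}(\zeta)=0$'' and a Second Fundamental Theorem count in the spirit of Lemma \ref{lem:7} finishes. Your proposed rigidity lemma on $\mathbb{C}$ is morally this last count, but in your setup the sharing information never reaches the function to which you would apply it.
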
 

In $2020,$ Cai et al. \cite[Theorem 2]{cai} obtained normality criterion for families of meromorphic functions corresponding to Theorem \ref{thm:tong} as:

\begin{theorem}\label{thm:cai}
Let $\mathcal{F}\subset\mathcal{M}(D),$ $S_1=\left\{a_1, a_2, a_3\right\}$ and $S_2=\left\{b_1,b_2,b_3\right\}$ be two finite subsets of $\mathbb{C}.$ Let $k$ and $m$ be two positive integers and suppose that for each $f\in\mathcal{F}$ and $a\in S_1,$ $f-a$ has zeros of multiplicity at least $k.$ If $$f(z)\in S_1\Leftrightarrow \left(f^{(k)}\right)^m\in S_2,$$ then $\mathcal{F}$ is normal in $D.$ 
\end{theorem}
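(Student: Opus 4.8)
The plan is to argue by contradiction, combining a Zalcman-type rescaling with Nevanlinna theory. Since normality is local, suppose $\mathcal{F}$ is not normal at some $z_0\in D$. The multiplicity hypothesis is phrased for $f-a$ rather than for $f$, so I would fix $a_1\in S_1$ and work with $\mathcal{G}=\{f-a_1:f\in\mathcal{F}\}$: every zero of a member of $\mathcal{G}$ has multiplicity $\ge k$, and at such a zero one has $f\in S_1$, hence $(f^{(k)})^m\in S_2$, so $f^{(k)}=(f-a_1)^{(k)}$ lies in the finite set $R$ of $m$-th roots of $b_1,b_2,b_3$; in particular $(f-a_1)^{(k)}$ is bounded on the zero set of $f-a_1$ by $A:=\max_{\beta\in R}|\beta|$. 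I would then invoke Pang's version of Zalcman's lemma for $\mathcal{G}$ with rescaling exponent $\alpha=k$: this produces $f_n\in\mathcal{F}$, points $z_n\to z_0$ and numbers $\rho_n\to0^{+}$ such that $G_n(\zeta):=\rho_n^{-k}\bigl(f_n(z_n+\rho_n\zeta)-a_1\bigr)$ converges, locally uniformly in the spherical metric, to a nonconstant meromorphic $G$ on $\mathbb{C}$ of finite order (indeed of order $\le 2$), all of whose zeros have multiplicity $\ge k$. The point of the exponent $\alpha=k$ is that $G_n^{(k)}(\zeta)=f_n^{(k)}(z_n+\rho_n\zeta)$, so $G_n^{(k)}\to G^{(k)}$ off the poles of $G$ and the set $S_2$ is not rescaled away in the limit.

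Next I would transfer the sharing relation to $G$ via Hurwitz's theorem. Writing $R=\{\beta_1,\dots,\beta_N\}$ with $N\ge 3$ (recall $|S_2|=3$), one obtains that $G(\zeta)=0$ implies $(G^{(k)}(\zeta))^m\in S_2$, while $(G^{(k)}(\zeta))^m\in S_2$ implies $G(\zeta)\in\{0,\infty\}$—the value $\infty$ intervening because the $a_2$- and $a_3$-points of the $f_n$, as well as their genuine poles, rescale to poles of $G$. Equivalently, the totality of the $\beta_i$-points of $G^{(k)}$ (including the zeros of $G^{(k)}$ when $0\in S_2$) coincides as a set with the zero set of $G$. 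A short separate check shows $G^{(k)}$ is nonconstant (otherwise $G$ would be a polynomial whose $a_1$-level set cannot meet the multiplicity and sharing constraints), and that when $0\notin S_2$ the zeros of $G$ have multiplicity exactly $k$.

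The heart of the matter is to derive a contradiction from this. The idea is to apply Nevanlinna's Second Fundamental Theorem to $G^{(k)}$ relative to the $N+1\ge 4$ values $\beta_1,\dots,\beta_N,\infty$: since $\sum_{i=1}^{N}\overline{N}\bigl(r,1/(G^{(k)}-\beta_i)\bigr)=\overline{N}(r,1/G)\le \tfrac1k N(r,1/G)$ and each pole of $G$ of multiplicity $q$ is a pole of $G^{(k)}$ of multiplicity $q+k$ (so $\overline{N}(r,G^{(k)})\le\tfrac{1}{k+1}T(r,G^{(k)})$), one bounds $T(r,G^{(k)})$ by a small multiple of $T(r,G)$, and then, using the lemma on the logarithmic derivative together with $G^{(k)}=(G^{(k)}/G)\,G$, one controls $\overline{N}(r,G)$ as well. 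For the remaining bookkeeping—closing the estimate for the small values of $k$, and handling the case $0\in S_2$ on its own—I would introduce an auxiliary function such as $\varphi=G^{(k)}/\bigl(G\prod_{i}(G^{(k)}-\beta_i)\bigr)$, whose zeros and poles are confined to the zeros and poles of $G$ with prescribed multiplicities, estimate its characteristic, and conclude $T(r,G)=S(r,G)$, which is impossible for a nonconstant $G$. This contradiction shows that $\mathcal{F}$ is normal in $D$.

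I expect the last step—extracting the contradiction from the limit function—to be the main obstacle, while the normal-family rescaling and the Hurwitz arguments should be routine. The subtle point is that the rescaling exponent (and the auxiliary functions) must be chosen so that all three elements of $S_2$ remain visible in the limit: with the naive exponent $\alpha=0$ the $b_j$ get multiplied by $\rho_n^{km}\to 0$ and disappear, and the resulting weaker limit conditions are satisfied already by the Weierstrass $\wp$-function (with $k=m=1$), so that route cannot succeed; retaining the $b_j$ and then pushing the Nevanlinna estimates through uniformly in $k$ and $m$, with the dichotomy $0\in S_2$ versus $0\notin S_2$, is where the real effort is concentrated.
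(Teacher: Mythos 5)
Note first that the paper does not prove this statement itself --- it is quoted from Cai et al.\ --- but the paper's own proof of Theorem \ref{thm:1} exhibits the template such arguments follow, and measured against either that template or an independent check, your proposal has a genuine gap in its endgame. Your opening is fine: the reduction to $\mathcal{G}=\{f-a_1\}$, the observation that $|(f-a_1)^{(k)}|\le A$ on the zero set (which is exactly what licenses the Pang--Zalcman rescaling with exponent $\alpha=k$ rather than $\alpha<k$), and the Hurwitz transfer of the sharing relation to the limit $G$ are all correct. But the Nevanlinna argument you propose does not close. From the Second Fundamental Theorem for $F=G^{(k)}$ with the values $\beta_1,\dots,\beta_N,\infty$ one gets, using $\overline{N}(r,F)\le\tfrac{1}{k+1}T(r,F)$ and $\sum_i\overline{N}(r,1/(F-\beta_i))\le\overline{N}(r,1/G)$, an upper bound of the shape $T(r,F)\lesssim\tfrac{1}{k(N-1)}T(r,G)$; and the logarithmic-derivative trick gives the lower bound $T(r,G)\le T(r,F)+N(r,1/G)-N(r,1/F)+S(r,G)\le T(r,F)+k\overline{N}(r,1/G)+S(r,G)$. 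Since the zeros of $G$ may all have multiplicity exactly $k$, the term $k\overline{N}(r,1/G)$ can be as large as $N(r,1/G)\sim T(r,G)$, and the two inequalities are then simultaneously satisfiable with $T(r,G)\to\infty$ (e.g.\ $T(r,F)\sim\tfrac{2}{3}T(r,G)$, $\overline{N}(r,1/G)\sim T(r,G)$ when $k=1$, $N=3$): no contradiction results. The auxiliary function $\varphi=G^{(k)}/\bigl(G\prod_i(G^{(k)}-\beta_i)\bigr)$ does not repair this, because its poles at zeros of $G$ have uncontrolled multiplicities and $m(r,1/\prod_i(F-\beta_i))$ costs another $\sim 2T(r,F)$. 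A second, smaller defect: your ``short separate check'' that $G^{(k)}$ is nonconstant is wrong as stated, since $G(\zeta)=\tfrac{\beta}{k!}(\zeta-c)^k$ with $\beta^m\in S_2$ satisfies both the multiplicity-$k$ condition and the forward sharing condition; this degenerate case must instead be excluded by the normalization $G^{\#}(\zeta)\le G^{\#}(0)=kA+1$ supplied by the Pang--Zalcman lemma (such a $G$ has $\sup G^{\#}\le kA$).

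The missing idea is a \emph{second} rescaling, which is precisely how the paper proves Theorem \ref{thm:1}. One first rescales with $\alpha=0$ to get a nonconstant limit $g$, fixes a single zero $\zeta_0$ of $g-a$ of some finite multiplicity $l$, shows $\{(g_n-a)/\rho_n^{k}\}$ is non-normal at $\zeta_0$, and rescales again to obtain an entire limit $\phi$. The payoff of zooming in at one zero is the paper's Claim 1: by Hurwitz, $\phi$ has at most $l$ distinct zeros, i.e.\ \emph{finitely many}. Writing $\phi=Re^{Q}$ with $R,Q$ polynomials, $(\phi^{(k)})^m$ is either a polynomial (the degenerate case, excluded as above) or takes every nonzero value infinitely often; hence some $b_j\in S_2$ is attained by $(\phi^{(k)})^m$ at a point $\xi_0$ where $\phi(\xi_0)\neq0$. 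Hurwitz then produces points where $(f_n^{(k)})^m=b_j\in S_2$, so by the sharing hypothesis $f_n\in S_1$ there; but $f_n\to a$ at these points, forcing $f_n=a$ for large $n$ and hence $\phi(\xi_0)=0$, a contradiction. It is the finiteness of the zero set of $\phi$ --- unavailable after a single rescaling, where $G$ may have infinitely many zeros --- that makes the contradiction go through; your single-rescaling-plus-SFT route cannot reach it.
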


Recently Singh and Lal \cite[Theorem 1]{vb} improved upon Theorem \ref{thm:tong} by considering partial sharing of sets in the following manner:

\begin{theorem}\label{thm:vb}
Let $f\in\mathcal{M}(\mathbb{D}),$ $S_1$ and $S_2$ be any two finite sets in $\mathbb{C}$ with $\#(S_1)\geq 3$ and $k\in\mathbb{Z}^{+}.$ If $$f^{(k)}(z)\in S_1\Rightarrow f(z)\in S_2,$$ and $$\max\limits_{1\leq i\leq k}\lvert f^{(i)}(z)\rvert=0 \mbox{ whenever }~f(z)\in S_1$$ in $\mathbb{D},$ then $f$ is a normal function.
\end{theorem}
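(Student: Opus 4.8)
The plan is to argue by contradiction via Noshiro's criterion. By the criterion quoted in the Introduction it suffices to prove that $\sup_{z\in\mD}(1-|z|^{2})f^{\#}(z)<\infty$. Suppose not; then there is a sequence $w_{n}\in\mD$ with $(1-|w_{n}|^{2})f^{\#}(w_{n})\to\infty$, and since $(1-|z|^{2})f^{\#}(z)$ is bounded on every compact subset of $\mD$ we may take $|w_{n}|\to 1$. I would then invoke the Lohwater--Pommerenke renormalization together with Pang's extension of Zalcman's lemma: the multiplicity hypothesis ($f',\dots,f^{(k)}$ all vanish at every point where $f\in S_{1}$, so $f-a$ has zeros of multiplicity at least $k+1$ for each $a\in S_{1}$) is precisely what makes admissible a rescaling with exponent $k$. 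Thus there are points $z_{n}\in\mD$ with $z_{n}\to\partial\mD$ and radii $\rho_{n}>0$ with $\rho_{n}\to 0$ and $\rho_{n}/(1-|z_{n}|^{2})\to 0$ (so $z_{n}+\rho_{n}\zeta\in\mD$ for $\zeta$ in any fixed disc and all large $n$) such that
\[
g_{n}(\zeta):=\rho_{n}^{-k}\,f(z_{n}+\rho_{n}\zeta)\longrightarrow g(\zeta)
\]
locally uniformly on $\mC$ in the spherical metric, where $g$ is a \emph{nonconstant} meromorphic function on $\mC$ of finite order with $g^{\#}(\zeta)\le g^{\#}(0)=1$. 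The decisive feature of this normalization is the identity $g_{n}^{(k)}(\zeta)=f^{(k)}(z_{n}+\rho_{n}\zeta)$, so that $g_{n}^{(k)}\to g^{(k)}$ locally uniformly off the poles of $g$.

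Next I would transfer both hypotheses to $g$ by Hurwitz's theorem. Suppose $g^{(k)}(\zeta_{0})=c$ for some $c\in S_{1}$ with $g^{(k)}\not\equiv c$. Then $g_{n}^{(k)}-c\to g^{(k)}-c$, which vanishes at $\zeta_{0}$, so Hurwitz supplies $\zeta_{n}\to\zeta_{0}$ with $g_{n}^{(k)}(\zeta_{n})=c$, i.e. $f^{(k)}(z_{n}+\rho_{n}\zeta_{n})=c\in S_{1}$. The partial-sharing hypothesis yields $f(z_{n}+\rho_{n}\zeta_{n})\in S_{2}$; since $S_{2}$ is finite and $\rho_{n}\to 0$, the value $g(\zeta_{0})=\lim_{n}\rho_{n}^{-k}f(z_{n}+\rho_{n}\zeta_{n})$ can only be $0$ or $\infty$, and $\infty$ is impossible because $g^{(k)}(\zeta_{0})=c$ is finite while $g^{(k)}$ has poles exactly where $g$ does. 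Hence $g(\zeta_{0})=0$, and carrying the multiplicity hypothesis along the same sequence gives $g'(\zeta_{0})=\dots=g^{(k-1)}(\zeta_{0})=0$, so $g$ vanishes to order at least $k$ at $\zeta_{0}$. In short, $g^{(k)}(\zeta)\in S_{1}$ forces $g$ to have a zero of multiplicity $\ge k$ at $\zeta$. If $0\notin S_{2}$ this makes $g^{(k)}$ omit every value of $S_{1}$; as $\#(S_{1})\ge 3$, Picard's theorem forces $g^{(k)}$ constant, hence $g$ a polynomial of degree $\le k$, which I would exclude by returning to the rescaling construction, noting that the multiplicity hypothesis prevents such a degenerate limit.

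It remains to handle the main case, $0\in S_{2}$ with $g^{(k)}$ nonconstant. For each $a\in S_{1}\setminus\{0\}$ every $a$-point of $g^{(k)}$ is then a zero of $g$ of multiplicity exactly $k$ (as $g^{(k)}(\zeta_0)=a\neq 0$ there), and $g^{(k)}-a$ has a \emph{simple} zero at such a point; consequently $N\!\left(r,\tfrac{1}{g^{(k)}-a}\right)=\overline N\!\left(r,\tfrac{1}{g^{(k)}-a}\right)\le\tfrac1k\,N\!\left(r,\tfrac1g\right)$, while $\overline N(r,g^{(k)})=\overline N(r,g)$ and, if $0\in S_{1}$, also $\overline N\!\left(r,\tfrac1{g^{(k)}}\right)\le\tfrac1{k+1}N\!\left(r,\tfrac1g\right)$. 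Feeding these into Nevanlinna's second fundamental theorem applied to $g^{(k)}$ against the $\ge 3$ values of $S_{1}$ together with $\infty$, and combining with the Hayman--Milloux inequality $T(r,g)\le N(r,g)+N(r,\tfrac1g)+N(r,\tfrac1{g^{(k)}-a})+S(r,g)$ for a fixed $a\in S_{1}\setminus\{0\}$, I expect a self-improving estimate that collapses to $T(r,g)=S(r,g)$, forcing $g$ constant and contradicting the Zalcman--Pang lemma. The residual degenerate configurations (rational or polynomial $g$, the case $0\in S_{1}$, and small $k$) would be disposed of by ad hoc variants of the same argument.

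The genuine obstacle is, I expect, twofold: (i) justifying the rescaling with exponent $k$ — that is, verifying via Pang's lemma that under exactly the stated multiplicity hypothesis the limit $g$ is nonconstant, of finite order, and not a polynomial; and (ii) closing the value-distribution endgame of the previous paragraph without slack, since the multiplicity savings are tight and the bookkeeping becomes delicate when $0$ lies in $S_{1}$ or $S_{2}$ and when $k$ is small.
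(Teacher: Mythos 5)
Your overall plan (rescale, transfer the hypotheses to the limit by Hurwitz, finish with value distribution) is in the right family of arguments, but as written it has two concrete gaps. First, the opening step --- a Lohwater--Pommerenke/Pang rescaling $g_n(\zeta)=\rho_n^{-k}f(z_n+\rho_n\zeta)$ with exponent $k$ --- is not licensed by the hypotheses. Lemma \ref{lem:2} (and its normal-function analogue) permits the exponent $\alpha=k$ only when \emph{all zeros of $f$ itself} have multiplicity greater than $k$; your hypothesis controls the zeros of $f-a$ only for $a\in S_1$, and $0$ need not belong to $S_1$. This is precisely why the proof of Theorem \ref{thm:2} (via Theorem \ref{thm:1}, of which the present statement is the case $P[f]=f^{(k)}$, $\alpha=k$) is two-staged: first a plain rescaling ($\alpha=0$) by Lemma \ref{lem:1} producing a nonconstant limit $g$; then, near a zero $\zeta_0$ of $g-a$ with $a\in S_1$ --- where the multiplicity hypothesis \emph{does} give zeros of order at least $k+1$ --- a second rescaling of $h_n=(g_n-a)/\rho_n^{k}$ with exponent $k$. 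Your one-step shortcut skips the only place where the multiplicity hypothesis becomes usable.

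Second, the Hurwitz transfer misapplies that hypothesis. At the points $z_n+\rho_n\zeta_n$ you produce, you know $f^{(k)}(z_n+\rho_n\zeta_n)=c\in S_1$ and hence $f(z_n+\rho_n\zeta_n)\in S_2$; but the vanishing condition $\max_{1\le i\le k}\lvert f^{(i)}\rvert=0$ is assumed only where $f\in S_1$, not where $f\in S_2$. So the conclusion ``$g'(\zeta_0)=\dots=g^{(k-1)}(\zeta_0)=0$, hence $g$ vanishes to order $\ge k$ at every $S_1$-point of $g^{(k)}$'' is unjustified unless the relevant value lies in $S_1\cap S_2$, and the whole value-distribution endgame rests on those unproven multiplicities. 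That endgame is in any case only a hope (``I expect a self-improving estimate''), and the exclusion of the degenerate limit $g$ a polynomial of degree $\le k$ is asserted rather than proved. The paper's route avoids all of this: it never does Nevanlinna theory on $g^{(k)}$; instead it shows the second-stage limit $\phi$ has finitely many zeros, that the limiting differential expression is nonconstant (Lemmas \ref{lem:4} and \ref{lem:5}), and then contradicts the finiteness of $S_2$ by exhibiting infinitely many distinct values $f(w_n)\in S_2$ accumulating at $a$ but never equal to $a$. I would restructure your argument along those lines.
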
 
 Note that the condition ``$\max\limits_{1\leq i\leq k}\lvert f^{(i)}(z)\rvert=0 \mbox{ whenever }~f(z)\in S_1$" in Theorem \ref{thm:vb} is equivalent to the condition that for each $a\in S_1,$ $f-a$ has zeros of multiplicity at least $k+1.$ 

\medskip

Regarding Theorem \ref{thm:cai} and Theorem \ref{thm:vb}, we consider the following questions:

\begin{itemize}
	\item[Q.$1$] Does the conclusion of Theorem \ref{thm:cai} remain valid under the weaker hypothesis of partial sharing of sets between $f^{(k)}$ and $f$? If yes, then does the conclusion still remain valid if $f^{(k)}$ is replaced by a  differential polynomial of $f?$
	\item[Q.$2$] Does the conclusion of Theorem \ref{thm:vb} remain valid if $f^{(k)}$ is replaced by a differential polynomial of $f?$
		\item[Q.$3$] Does there exist a normality criterion for a function $f\in\mathcal{M}(D)$ under the hypothesis of Theorem \ref{thm:cai}?
	\item[Q.$4$] Does there exist a normality criterion for a family $\mathcal{F}\subset\mathcal{M}(D)$ under the hypothesis of Theorem \ref{thm:vb}?
\end{itemize}

To answer these questions, we need some preparation:

\medskip

Let $k$ be a positive integer and let $n_0, n_1,\ldots, n_k$ be non-negative integers (not all zeros). A mapping $M: \mathcal{M}(D)\rightarrow\mathcal{M}(D)$ given by $$M[f]=b\cdot\prod\limits_{j=0}^{k}\left(f^{(j)}\right)^{n_j}~\mbox{for all}~f\in\mathcal{M}(D),$$ where $a\in\mathcal{M}(D),~b\not\equiv 0,$ is called a differential monomial of degree, $d(M):=\sum\limits_{j=0}^{k}n_j$ and weight, $w(M):=\sum\limits_{j=0}^{k}(j+1)n_j.$ 

We call $b,$ the coefficient of $M$ and if $b\equiv 1,$ then $M$ is said to be normalized differential monomial. Also, the number $k$ is known as the differential order of $M.$

Let $f\in\mathcal{M}(D)$ and $$M_i[f]:=b_i\prod\limits_{j=0}^{k_i}\left(f^{(j)}\right)^{n_j},$$ $1\leq i\leq m,$ be $m$ differential monomials. Then the sum $$P:= \sum\limits_{i=1}^{m}M_i$$ is called a differential polynomial and the quantities $d(P):=\max\left\{d(M_i): 1\leq i\leq m\right\}$ and $w(P):=\max\left\{w(M_i):1\leq i\leq m\right\}$ are called the degree and weight of the differential polynomial $P$ respectively. Also, we call the number $\kappa:=\max\left\{k_i:1\leq i\leq m\right\},$ the differential order of $P.$ If $d(M_1)=d(M_2)=\cdots=d(M_m),$ then $P$ is said to be a homogeneous differential polynomial.

\medskip

In the present paper, we are concerned with the differential polynomials of the form 
\begin{equation}\label{eqn:1}
P=\sum\limits_{i=1}^{m}a_i~M_i,
\end{equation}
where $$M_i[f]=\prod\limits_{j=1}^{k_i}\left(f^{(j)}\right)^{n_j},1\leq i\leq m~$$ are normalized differential monomials and the coefficients $a_{i}$ are holomorphic in $D.$

Moreover, we set 
\begin{eqnarray}\label{eqn:2}
1+\alpha:=\frac{w(M_1)}{d(M_1)}\geq\frac{w(M_t)}{d(M_t)}, ~\mbox{ for }~ 2\leq t\leq m.
\end{eqnarray}
Note that the arrangement on the right hand side of \eqref{eqn:2} occurs naturally.

Furthermore, we assume that the coefficients $a_i$ are non-vanishing for those $M_i$ for which $$\frac{w(M_i)}{d(M_i)}=1+\alpha~\mbox{and}~\alpha\geq\kappa~\mbox{is an integer}.$$ 

\medskip

Finally, we assume that the reader is familiar with the standard notations of the Nevanlinna Theory like $m(r,f),~N(r,f),~T(r,f)$ (see \cite{hayman}).

\medskip

Now we state our main results: 

\begin{theorem}\label{thm:1}
Let $\mathcal{F}\subset\mathcal{M}(D)$ and $S_1$ and $S_2$ be two finite subsets of $\mathbb{C}$ with $\#(S_1)\geq 3.$ Let $P$ be a differential polynomial defined in \eqref{eqn:1} and satisfying \eqref{eqn:2}. Suppose that for each $f\in\mathcal{F}$ and $a\in S_1,$ $f-a$ has zeros of multiplicity at least $\alpha+1.$ If $$P\left[f\right](z)\in S_1\Rightarrow f(z)\in S_2$$ in $D,$ then $\mathcal{F}$ is normal in $D.$
\end{theorem}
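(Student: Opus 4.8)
The plan is a contradiction argument built on a rescaling (Zalcman--Pang) lemma: transfer the hypotheses to a limit function on $\mathbb{C}$, and close off with value-distribution theory. Normality being local, I would suppose $\mathcal{F}$ is not normal at some $z_0\in D$, taken to be $0$. Because the monomials of $P$ involve only derivatives of order at least $1$, replacing each $f\in\mathcal{F}$ by $f-a_0$ for a fixed $a_0\in S_1$ changes neither $P[f]$ nor the normality of $\mathcal{F}$, but now $0$ lies among the values at which every member of $\mathcal{F}$ has only zeros of multiplicity $\geq\alpha+1$; hence the functions of $\mathcal{F}$ have zeros of multiplicity $\geq\alpha+1$, so $\alpha$ is an admissible exponent in the Pang--Zalcman lemma. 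That lemma yields $f_n\in\mathcal{F}$, $z_n\to 0$ and $\rho_n\to 0^{+}$ with
\[
g_n(\zeta):=\rho_n^{-\alpha}f_n(z_n+\rho_n\zeta)\ \longrightarrow\ g(\zeta)
\]
locally uniformly on $\mathbb{C}$ in the spherical metric, where $g$ is a nonconstant meromorphic function with $g^{\#}(\zeta)\leq g^{\#}(0)=1$; in particular $g$ has finite order and its zeros have multiplicity $\geq\alpha+1$. (Should poles of the $f_n$ accumulate at $0$, I would instead rescale $\{1/f_n\}$, whose poles have multiplicity $\geq\alpha+1$.)

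Next I would pass $P$ to the limit. Since $g_n^{(j)}(\zeta)=\rho_n^{\,j-\alpha}f_n^{(j)}(z_n+\rho_n\zeta)$, one has $M_i[g_n](\zeta)=\rho_n^{\,w(M_i)-(1+\alpha)d(M_i)}M_i[f_n](z_n+\rho_n\zeta)$, and by \eqref{eqn:2} the exponent $w(M_i)-(1+\alpha)d(M_i)$ vanishes precisely for the monomials of maximal weight-to-degree ratio and is negative for the others. On compact subsets of $\mathbb{C}$ avoiding the poles of $g$, $g_n\to g$ together with all derivatives, so $M_i[g_n]\to M_i[g]$; reinstating the powers of $\rho_n$, the non-leading monomials carry $\rho_n$ to a positive power and die, whence
\[
P[f_n](z_n+\rho_n\zeta)\ \longrightarrow\ Q[g](\zeta):=\sum_{w(M_i)/d(M_i)=1+\alpha}a_i(0)\,M_i[g](\zeta)
\]
locally uniformly off the poles of $g$. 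Here $Q[g]$ is a differential polynomial in $g$ of weight at least $2$, with constant coefficients, all of whose monomials have weight-to-degree ratio $1+\alpha$; the standing non-vanishing assumption, together with $g$ being nonconstant with zeros of multiplicity $\geq\alpha+1$, forces $Q[g]\not\equiv 0$.

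Now I would exploit the partial sharing to show that $Q[g]$ omits every nonzero value of $S_1$. Suppose $Q[g](\zeta^{*})=c$ for some $c\in S_1\setminus\{0\}$; in the generic case $Q[g]\not\equiv c$, and since $\zeta^{*}$ cannot be a pole of $g$, Hurwitz's theorem applied to $P[f_n](z_n+\rho_n\zeta)-c$ on a pole-free neighbourhood of $\zeta^{*}$ produces $\zeta_n\to\zeta^{*}$ with $P[f_n](z_n+\rho_n\zeta_n)=c\in S_1$. The hypothesis then forces $f_n(z_n+\rho_n\zeta_n)\in S_2$: if this value is nonzero, $|g_n(\zeta_n)|\to\infty$ and hence $g(\zeta^{*})=\infty$, which is impossible; if it is $0$, then $z_n+\rho_n\zeta_n$ is a zero of $f_n$, at which $f_n^{(j)}$ and hence every $M_i[f_n]$ vanish, giving $c=P[f_n](z_n+\rho_n\zeta_n)=0$, again impossible. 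The sub-case $Q[g]\equiv c\in S_1$ is treated similarly --- it pins $g$ down so tightly that Picard's theorem forces $g$ to be constant. Since $\#(S_1)\geq 3$, we conclude that $Q[g]$ omits at least two finite values, and three when $0\notin S_1$.

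There remains a nonconstant meromorphic $g$ on $\mathbb{C}$, of finite order, with zeros of multiplicity $\geq\alpha+1$, whose differential polynomial $Q[g]$ of weight at least $2$ omits at least two finite values --- and I would now force $g$ to be constant, a contradiction. When $\alpha=\kappa$ (the case covered by the non-vanishing hypothesis), \eqref{eqn:2} forces the leading monomials to be powers of $f^{(\kappa)}$, so $Q[g]=R\!\left(g^{(\kappa)}\right)$ for a polynomial $R$ with $R(0)=0$, $\deg R\geq 1$; if $0\notin S_1$ the omitted values of $Q[g]$ pull back to at least three omitted values of $g^{(\kappa)}$, so Picard makes $g^{(\kappa)}$ constant, $g$ a polynomial of degree $\leq\kappa$, hence --- its zeros having multiplicity exceeding its degree --- constant. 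The substantive difficulty is the remaining situation, $0\in S_1$ (and, more generally, $\alpha<\kappa$), where $Q[g]$ is known to omit only two values: there one must combine the multiplicity $\geq\alpha+1$ of the zeros of $g$ with a Hayman-type / second-main-theorem estimate for differential polynomials --- the mechanism underlying the classical results this theorem generalizes --- to conclude $g\equiv\mbox{const}$. That step, where hypothesis \eqref{eqn:2}, the non-vanishing of the leading coefficients and $\#(S_1)\geq 3$ are used in concert, is --- together with the delicate matching of the rescaling exponent $\alpha$ to multiplicity information that is available only on $f-a$, not on $f$ --- the part I expect to require the most care.
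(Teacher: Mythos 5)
Your overall strategy is genuinely different from the paper's and is mostly viable: you rescale once, with exponent $\alpha$, after translating by a fixed $a_0\in S_1$ (legitimate, since the monomials of $P$ contain no $f^{(0)}$-factor and the translated family has zeros of multiplicity at least $\alpha+1$, so Lemma \ref{lem:2} applies with $-1<\alpha<\alpha+1$), and you then show that the limiting leading part $Q[g]$ must \emph{omit} every nonzero value of $S_1$. The paper instead rescales twice: first with exponent $0$ to get $g$, then again at a zero of $g-a$ (which exists by Picard since $\#(S_1)\geq 3$) to produce an \emph{entire} limit $\phi$ with finitely many zeros. That extra step is not cosmetic: for a nonconstant entire $Q[\phi]$, Picard leaves at most one omitted finite value, so $Q[\phi]$ must \emph{assume} some nonzero $\nu\in S_1$, and the preimages then give infinitely many distinct values of $f_n$ converging to $a$ but different from $a$, all trapped in the finite set $S_2$ --- that is the paper's contradiction. (The finitely-many-zeros property is what lets the paper invoke Lemmas \ref{lem:4} and \ref{lem:5} to rule out $Q[\phi]\equiv\mathrm{const}$.)

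The genuine gap in your write-up is exactly the step you flag at the end: when $0\in S_1$, so that only two nonzero values of $S_1$ are available, your meromorphic $g$ yields a $Q[g]$ known to omit only two finite values, and a nonconstant meromorphic function can do that; you defer the contradiction to an unspecified ``Hayman-type estimate.'' This is not a dead end, but it must be filled, and the ingredients are already on the table. Your own observation that the standing hypotheses force $\alpha=\kappa$ and $Q[g]=R\bigl(g^{(\alpha)}\bigr)$ with $R(0)=0$ does the job: if $Q[g]$ omits two nonzero values $c_1,c_2\in S_1$, then $g^{(\alpha)}$ omits at least two nonzero finite values, namely one root of $R-c_1$ and one of $R-c_2$; moreover every pole of $g^{(\alpha)}$ has order at least $\alpha+1\geq 2$, since a pole of $g$ of order $p$ becomes a pole of $g^{(\alpha)}$ of order $p+\alpha$. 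Lemma \ref{neva} then gives $1+1+\left(1-\frac{1}{\alpha+1}\right)\leq 2$, which is false, so $g^{(\alpha)}$ is constant and $g$ is a nonconstant polynomial of degree at most $\alpha$ --- incompatible with its zeros having multiplicity at least $\alpha+1$. With that paragraph added (and the same reduction used to justify, rather than assert, $Q[g]\not\equiv 0$ and to dispatch the sub-case $Q[g]\equiv c$), your argument closes; the parenthetical about rescaling $1/f_n$ is unnecessary, since Lemma \ref{lem:2} already tolerates simple poles.
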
 

\begin{example}
Let $\mathcal{F}=\left\{f_n:n\geq 2\right\}$ be a family of meromorphic functions in the punctured open unit disk  $\mD ^*:=\left\{z\in\mathbb{C}:0<\lvert z\rvert <1\right\}$ given by $$f_n(z)=\frac{nz^2}{2}+n.$$ Then it is easy to see that $\lvert f_n(z)\rvert\geq 1$ in $\mD^*.$ Let $S_1=\left\{a_1, a_2, a_3\right\},$ where $a_i$'s are distinct complex numbers such that $\lvert a_i\rvert <1~(i=1,2,3).$ Clearly, for each $a\in S_1,$ $f_n-a$ has zeros of arbitrary multiplicity.

Let $$P[f_n]= f_n'+\frac{1}{z^2}\cdot (f_n')^2.$$ Then $P[f_n](z)=nz+n^2$ and so $P[f_n](z)\in S_1\Rightarrow f_n(z)\in S_2,$ for any finite set $S_2$ in $\mathbb{C}.$ Obviously, the family $\mathcal{F}$ is normal in $\mD^*.$ This illustrates Theorem \ref{thm:1}. \end{example}

Theorem \ref{thm:1} gives affirmative answers to questions Q.$1$ and Q.$4.$

\begin{remark}
If $\mathcal{F}$ happens to be a family of holomorphic functions in $D$ and $0\notin S_1,$ then the condition $\#(S_1)\geq 3$ in Theorem \ref{thm:1} can be replaced by the condition $\#(S_1)\geq2.$ This is a direct consequence of Picard's Theorem. In particular, if the differential polynomial $P$ in Theorem \ref{thm:1} is replaced by some ordinary derivative $\left(f^{(k)}\right)^m$ with $\alpha$ replaced by $k,$ where $k,~m$ are positive integers, then the conclusion of Theorem \ref{thm:1} remains valid even if $0\in S_1$ by \cite[Lemma 4]{chen-fang}. However, the cardinality of $S_1$ cannot be reduced to one as the following example demonstrates:
\end{remark}

\begin{example}
Let $k$ and $m$ be any two positive integers and $\mathcal{F}=\left\{f_n: n\in\mathbb{N}\right\}$ be a family of holomorphic functions in $\mathbb{D}$ given by $$f_n(z)=e^{nz}.$$ Let $P[f_n]=\left(f_n^{(k)}\right)^m$ and $S_1=\left\{0\right\}.$ Since each $f_n$ omits $0,$ it follows that $f_n$ has zeros of arbitrary multiplicity. Obviously, $P[f_n](z)\in S_1\Rightarrow f_n(z)\in S_2,$ for any finite set $S_2$ in $\mathbb{C}.$ However, the family $\mathcal{F}$ is not normal in $\mathbb{D}.$
\end{example}

%\begin{example}
%Let $k$ and $m$ be any two %positive integers and %$\mathcal{F}=\left\{f_n: %n\in\mathbb{N}\right\}$ be a %family of meromorphic functions %in $\mathbb{D}$ given by %$$f_n(z)=e^{nz}.$$ Let %$P[f_n]=\left(f_n^{(k)}\right)^m%$ and $S_1=\left\{0\right\}.$ %Clearly, $f_n$ has zeros of %arbitrary multiplicity and %$P[f_n](z)\in S_1\Rightarrow %f_n(z)\in S_2,$ for any finite %set $S_2$ in $\mathbb{C}.$ %However, the family %$\mathcal{F}$ is not normal in %$\mathbb{D}.$
%\end{example}

A criterion for normal meromorphic functions corresponding to Theorem \ref{thm:1} is

\begin{theorem}\label{thm:2}
Let $f\in\mathcal{M}(\mathbb{D})$ and $S_1$ and $S_2$ be two finite subsets of $\mathbb{C}$ with $\#(S_1)\geq 3.$ Let $P$ be a differential polynomial defined in \eqref{eqn:1} and satisfying \eqref{eqn:2}. Suppose that for each $a\in S_1,$ $f-a$ has zeros of multiplicity at least $\alpha+1.$ If $$P\left[f\right](z)\in S_1\Rightarrow f(z)\in S_2$$ in $\mathbb{D},$ then $f$ is a normal function.
\end{theorem}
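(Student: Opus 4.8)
The plan is to derive Theorem~\ref{thm:2} from Theorem~\ref{thm:1} via the standard Zalcman-type passage between normal families and normal functions. Recall that by the Noshiro criterion quoted in the introduction, $f\in\mathcal{M}(\mathbb{D})$ is a normal function if and only if $\sup_{z\in\mathbb{D}}(1-|z|^2)f^{\#}(z)<\infty$. So it suffices to show that if $f$ is \emph{not} normal, then the partial-sharing hypothesis of Theorem~\ref{thm:2} fails. First I would assume, for contradiction, that $f$ is not normal. Then by a well-known lemma (a form of Zalcman's lemma adapted to normal functions; see e.g.\ the works cited for Theorems~\ref{thm:cai}--\ref{thm:vb}), there exist sequences $z_n\in\mathbb{D}$ with $|z_n|\to 1$ and $\rho_n\to 0^{+}$ with $\rho_n/(1-|z_n|^2)\to 0$, such that the rescaled functions
\begin{equation*}
g_n(\zeta):=f\!\left(z_n+\rho_n\zeta\right)
\end{equation*}
converge locally uniformly, with respect to the spherical metric on $\mathbb{C}$, to a nonconstant meromorphic function $g$ on $\mathbb{C}$ of finite order, with $g^{\#}(\zeta)\le g^{\#}(0)=1$.

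Next I would transfer the hypotheses of Theorem~\ref{thm:2} to the limit $g$. The multiplicity condition ``$f-a$ has zeros of multiplicity at least $\alpha+1$ for each $a\in S_1$'' passes to $g$ by Hurwitz's theorem (a zero of $g-a$ is a limit of zeros of $g_n-a$, whose multiplicities are at least $\alpha+1$, and multiplicity cannot drop in a locally uniform limit of nonconstant functions). For the differential polynomial, one computes $P[g_n](\zeta)=\sum_i a_i(z_n+\rho_n\zeta)\,\rho_n^{-\,e_i}\,M_i[g_n](\zeta)$, where $e_i=w(M_i)-d(M_i)$ is the excess weight of the $i$-th monomial; the normalization \eqref{eqn:2} says $e_i\le\alpha$ with equality exactly for the ``leading'' monomials whose coefficients are assumed non-vanishing. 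Multiplying through by $\rho_n^{\alpha}$ and letting $n\to\infty$, the monomials with $e_i<\alpha$ die out, while the leading ones survive (their coefficients $a_i(z_n+\rho_n\zeta)\to a_i(\zeta_0)\ne 0$ for the relevant accumulation point), so $\rho_n^{\alpha}P[f](z_n+\rho_n\zeta)$ tends to a nonzero constant multiple of the homogeneous ``leading part'' $\widehat P[g]$ of $P$ acting on $g$. One then argues that the partial-sharing condition $P[f](z)\in S_1\Rightarrow f(z)\in S_2$, combined with the multiplicity requirement, forces $\widehat P[g]$ to omit certain values or to have a very restricted zero structure on $\mathbb{C}$; invoking Theorem~\ref{thm:1} itself (applied to the family $\{g_n\}$, which is a family of meromorphic functions on a disk satisfying the hypotheses of Theorem~\ref{thm:1}, hence normal) gives a contradiction with the fact that $g$ is nonconstant. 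Alternatively, and more cleanly, one applies Theorem~\ref{thm:1} directly to the family $\mathcal{G}=\{f\circ\psi:\psi\in\mathcal{A}(\mathbb{D})\}$: each $f\circ\psi$ inherits the multiplicity hypothesis and, using the chain rule to rewrite $P[f\circ\psi]$ in terms of $(P^{\psi}[f])\circ\psi$ for a suitably modified differential polynomial $P^{\psi}$ still of the form \eqref{eqn:1} with the same $\alpha$, the partial-sharing hypothesis is preserved along $\mathcal{G}$; Theorem~\ref{thm:1} then yields normality of $\mathcal{G}$, which is exactly the statement that $f$ is a normal function.

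I expect the main obstacle to be the bookkeeping in the second route: verifying that, after composing with an automorphism $\psi$ of $\mathbb{D}$, the differential polynomial $P[f\circ\psi]$ can genuinely be written as $h_\psi\cdot\bigl(Q_\psi[f]\circ\psi\bigr)$ where $Q_\psi$ is again a differential polynomial of the structural type \eqref{eqn:1}--\eqref{eqn:2} with the \emph{same} value of $\alpha$ and with non-vanishing leading coefficients. The chain rule introduces powers of $\psi'$ and lower-order derivative terms; one must check that the leading weight-to-degree ratio $1+\alpha$ is conformally invariant under this operation (it is, because $\psi'$ is nowhere zero on $\mathbb{D}$ so it only contributes to the coefficient, not to the order/degree count) and that the ``$\alpha\ge\kappa$ integer'' proviso together with the non-vanishing-coefficient assumption survives. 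Once this invariance is in place, the rest is a direct quotation of Theorem~\ref{thm:1}. I would therefore present the proof in the structured form: (i) reduce to showing $\mathcal{G}=\{f\circ\psi\}$ is normal; (ii) check the multiplicity hypothesis is $\psi$-invariant; (iii) establish the chain-rule identity and the $\psi$-invariance of the class \eqref{eqn:1}--\eqref{eqn:2}; (iv) apply Theorem~\ref{thm:1}.
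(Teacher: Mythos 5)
Your opening move (assume $f$ is not normal and apply the Lohwater--Pommerenke rescaling, Lemma~\ref{lem:1}, to get $g_n(\zeta)=f(z_n+\rho_n\zeta)\to g$ nonconstant) is exactly how the paper begins; the paper then finishes by re-running the \emph{internal} argument of the proof of Theorem~\ref{thm:1} on this sequence. The problem is that neither of your two proposed ways of finishing closes the argument, and both fail for the same structural reason: the hypothesis ``$P[f](z)\in S_1\Rightarrow f(z)\in S_2$'' concerns the values of $P[f]$, and $P$ is not equivariant under composition or rescaling. For $\mathcal{G}=\{f\circ\psi\}$, the chain rule gives $(f\circ\psi)^{(j)}$ as a sum of terms $f^{(i)}(\psi)$ times polynomials in $\psi',\psi'',\dots$, so $M_i[f\circ\psi]$ is \emph{not} of the form $h_\psi\cdot\bigl(M_i[f]\circ\psi\bigr)$ once second or higher derivatives occur; even in the purely first-order homogeneous case one gets $P[f\circ\psi]=(\psi')^{d}\cdot\bigl(P[f]\circ\psi\bigr)$, so $P[f\circ\psi](w)\in S_1$ corresponds to $P[f](\psi(w))\in(\psi'(w))^{-d}S_1$, a set that moves with $w$ and $\psi$. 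Hence $\mathcal{G}$ does not inherit the sharing hypothesis with the fixed sets $S_1,S_2$, and steps (iii)--(iv) of your preferred route cannot be carried out. The same non-equivariance blocks applying Theorem~\ref{thm:1} to $\{g_n\}$, since $P[g_n](\zeta)=\sum_i a_i(z_n+\rho_n\zeta)\,\rho_n^{\,w(M_i)-d(M_i)}M_i[f](z_n+\rho_n\zeta)\neq P[f](z_n+\rho_n\zeta)$; moreover the contradiction you aim for there is vacuous, because $\{g_n\}$ converges locally uniformly to $g$ by construction and is therefore automatically normal --- normality of a sequence is perfectly compatible with a nonconstant limit.

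What is actually needed is to repeat the machinery of Theorem~\ref{thm:1} rather than quote its statement: perform a \emph{second} Zalcman--Pang rescaling (Lemma~\ref{lem:2}, with exponent $\alpha$) at a zero $\zeta_0$ of $g-a$ for some $a\in S_1$, obtaining $\phi_n(\xi)=\bigl(f(z_n+\rho_n\hat\zeta_n+\rho_nr_n\xi)-a\bigr)/(\rho_nr_n)^{\alpha}\to\phi$ entire and nonconstant with zeros of multiplicity at least $\alpha+1$ and only finitely many of them; use the normalization \eqref{eqn:2} to show $P[f](z_n+\rho_n\hat\zeta_n+\rho_nr_n\xi)\to Q[\phi](\xi)$, the leading homogeneous part of $P$ evaluated at $\phi$; rule out $Q[\phi]\equiv\mbox{const}$ via Lemmas~\ref{lem:4} and~\ref{lem:5}; and finally use Hurwitz together with the multiplicity hypothesis to produce infinitely many distinct values $f(z_n+\rho_n\hat\zeta_n+\rho_nr_n\xi_n)\neq a$ tending to $a$ at points where $P[f]$ equals a fixed nonzero $\nu\in S_1$, contradicting the finiteness of $S_2$. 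Your first route gestures at the rescaling limit of $P[f]$ correctly, but the decisive endgame is replaced by an appeal to Theorem~\ref{thm:1} that does not apply, so as written the proposal has a genuine gap.
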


Theorem \ref{thm:2} gives affirmative answer to question Q.$2.$ Also, one can obtain an affirmative answer to question Q.$3$ by a simple modification of Theorem \ref{thm:2} with $S_1 \mbox{ and }S_2$ as three point sets and by considering the sharing as $f(z)\in S_1\Leftrightarrow \left(f^{(k)}\right)^m\in S_2$ under the assumption that for each $a\in S_1,$ $f-a$ has zeros of multiplicity at least $k.$

\begin{remark}
It is important to note that if the cardinality of the set $S_1$ in Theorem \ref{thm:1} and Theorem \ref{thm:2} is greater than $4$, then the condition ``for each $a\in S_1,$ $f-a$ has zeros of multiplicity at least $\alpha+1$" is enough to ensure normality. This follows immediately by a simple application of Lemma \ref{lem:2} and Lemma \ref{lem:1}, respectively, together with the fact that a non-constant meromorphic function cannot have more than four totally ramified values. Furthermore, the set $S_1$ in Theorem \ref{thm:1} and Theorem \ref{thm:2} can be taken to be a two point set consisting of  distinct non-zero complex values of which at least one is not a Picard exceptional value. Also, it easily follows from \cite[Theorem 5]{grahl} that the two point set $S_1$ may contain zero if the differential polynomial $P$ is of the form $P=\sum\limits_{i=1}^{m}a_i M_i$ with non-zero constant coefficients $a_i$ and normalized differential monomials $M_i$ in $f'$ satisfying at least one of the following conditions:
\begin{itemize}
    \item[$(i)$] $d(M_i)\geq 2$ for all $i=1,2,\ldots,m;$
    \item [$(ii)$] $w(M_1)\geq w(M_j)+2$ for all $j=2,\ldots,m.$ 
\end{itemize}
\end{remark}

The following example shows that the condition ``for each $f\in\mathcal{F}$ and $a\in S_1,$ $f-a$ has zeros of multiplicity at least $\alpha+1$" in Theorem \ref{thm:1} cannot be dropped.

\begin{example}
Let $\mathcal{F}=\left\{f_n: f_n(z)=nz,~n\in\mathbb{N}\right\}$ be a family of  meromorphic functions in $\mathbb{D}$ and let $S_1$ be any three point set in $\mathbb{C}\setminus\mathbb{N}.$ Then for each $a\in S_1,$ $f_n-a$ has only simple zeros. Consider $P[f_n]=f_n'.$ Then $P[f_n](z)=n.$ Clearly, $$P[f_n](z)\in S_1\Rightarrow f_n(z)\in S_2$$ for any finite set $S_2$ in $\mathbb{C}.$  However, the family $\mathcal{F}$ is not normal in $\mathbb{D}.$
\end{example}

The next example demonstrates that the condition ``$P\left[f\right](z)\in S_1\Rightarrow f(z)\in S_2$" in Theorem \ref{thm:1} is essential.

\begin{example}
Let $\mathcal{F}=\left\{f_n: n\in\mathbb{N}\right\}$ be a family of holomorphic functions in $\mathbb{D}$ given by $$f_n(z)=\cos\left(e^{z+n}\right).$$ Let $S_1=S_2=\left\{-1,1\right\}$ and $P[f_n]=f_n'.$ Then for each $a\in S_1,$ $f_n-a$ has zeros of multiplicity $2$ and $P[f_n](z)\in S_1\not\Rightarrow f_n(z)\in S_2.$ However, the family $\mathcal{F}$ is not normal in $\mathbb{D}$ since along real axis, the family $\mathcal{F}$ is uniformly bounded by $1$ but along imaginary axis, the limit function is $\infty.$
\end{example}

The condition ``for each $a\in S_1,$ $f-a$ has zeros of multiplicity at least $\alpha+1$" in Theorem \ref{thm:2} is not redundant as demonstrated by the following example (one may also see \cite[p.193]{hayman-storvick}):

\begin{example}
Consider $$f(z)=2(1-z)~\mbox{exp}\left\{\frac{2+z}{1-z}\right\}$$ in the open unit disk $\mathbb{D}.$ Then $$f'(z)=\frac{4+2z}{1-z}~\mbox{exp}\left\{\frac{2+z}{1-z}\right\}$$ and $f(z)\neq 0$ in $\mathbb{D}.$ Also, one can easily see that $\lvert f'(z)\rvert>\sqrt{e}$ in $\mathbb{D}.$ Let $S_1=\left\{0,a_1,a_2\right\},$ where $a_i$'s $(i=1,2)$ are non-zero distinct complex numbers with $\lvert a_i\rvert<\sqrt{e}$ and $S_2$ be any finite set in $\mathbb{C}.$ Note that $f-a_i$ does not have zeros of multiplicity at least $2.$ Let $P[f]=f'$ and $a_3$ be any non-zero complex number distinct from $a_1$ and $a_2$ such that $\lvert a_3\rvert<\sqrt{e}.$ Clearly, $\left\lvert f''(z)\right\rvert\leq M$ whenever $f'(z)=a_3$ for any $M>0$ and $P\left[f\right](z)\in S_1\Rightarrow f(z)\in S_2.$ However, $f$ is not a normal function since if $$z=\frac{1}{2}(1+e^{i\theta}),~0<\theta<2\pi,$$ $$\mbox{then}~~~\lvert f(z)\rvert=e^2\lvert\sin\left(\theta/2\right)\rvert\longrightarrow 0,~\mbox{as}~\theta\longrightarrow 0,$$ whereas $$f(z)\longrightarrow\infty,~\mbox{as}~z\longrightarrow 1^-~\mbox{through real values}.$$ 
\end{example}

The following two results establish that the cardinality of the set $S_1$ in Theorem \ref{thm:1} and Theorem \ref{thm:2} can be reduced by one under suitable conditions.

\begin{theorem}\label{thm:3}
Let $\mathcal{F}\subset\mathcal{M}(D),$ $S_1$ and $S_2$ be two finite subsets of $\mathbb{C}$ with $S_1=\left\{a_1, a_2\right\},$ where $a_1,~a_2$ are non-zero distinct complex numbers. Let $P$ be a differential polynomial defined in \eqref{eqn:1} and satisfying \eqref{eqn:2}. Suppose that for each $f\in\mathcal{F},$ $f-a_i~(i=1,2)$ has zeros of multiplicity at least $\alpha+1.$ If $$P\left[f\right](z)\in S_1\Rightarrow f(z)\in S_2$$ in $D$ and if there exist some $M>0$ and a point $a_3\in\mathbb{C}\setminus S_1$ such that $\lvert f'(z)\rvert\leq M$ whenever $f(z)=a_3,$ then $\mathcal{F}$ is normal in $D.$
\end{theorem}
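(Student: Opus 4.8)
The plan is to argue by contradiction using Zalcman's rescaling lemma, exactly in the spirit of the proof of Theorem \ref{thm:1} but with the extra input provided by the bounded-derivative condition at $a_3$ to compensate for the missing third point. Suppose $\mathcal{F}$ is not normal at some $z_0\in D$. Then, by the Zalcman--Pang lemma, there exist sequences $f_n\in\mathcal{F}$, $z_n\to z_0$ and $\rho_n\to 0^+$ such that $g_n(\zeta):=\rho_n^{-\beta}f_n(z_n+\rho_n\zeta)$ converges locally uniformly (spherically) on $\mathbb{C}$ to a nonconstant meromorphic function $g$ of finite order, where the exponent $\beta$ is chosen appropriately in terms of $\alpha$ (the natural choice, as in the proof of Theorem \ref{thm:1}, being $\beta=\alpha$ so that the multiplicity hypothesis on the $a_i$-points is preserved in the limit and so that $\rho_n^{w(M_1)-\beta d(M_1)}=\rho_n^{\alpha d(M_1)-\beta d(M_1)}$ stays bounded). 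First I would record the standard consequences: since $f_n-a_i$ has zeros of multiplicity $\ge \alpha+1$, the limit $g$ has the property that $g-0$ (after the rescaling, the points $a_i$ collapse toward $0$ because $\rho_n^{-\beta}\cdot(\text{bounded})\to 0$... ) — more precisely, $g$ omits the value $0$ or has only zeros of multiplicity $\ge\alpha+1$ there, and the non-vanishing/weight hypotheses on $P$ guarantee that the rescaled differential polynomials $P[f_n]$ converge to $P[g]$ up to the normalizing power of $\rho_n$.

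The crux is to extract, from the two hypotheses $P[f](z)\in S_1\Rightarrow f(z)\in S_2$ and the boundedness of $f'$ on the $a_3$-set, enough information to force $g$ to be constant. Here is where I expect the main obstacle. In the three-point case (Theorem \ref{thm:1}) one uses that $P[g]$ omits three values, which via a Picard/Nevanlinna argument (the second fundamental theorem applied to $P[g]$, together with the ramification coming from the multiplicity hypothesis) is impossible for a nonconstant $g$. With only $S_1=\{a_1,a_2\}$ we lose one value, so we must manufacture a third constraint. The point $a_3$ does this: the condition $|f'|\le M$ wherever $f=a_3$ means, after rescaling, that at every $a_3$-point of the limit the derivative $g'$ vanishes (because $g_n'(\zeta)=\rho_n^{1-\beta}f_n'(\cdots)$ and $\rho_n^{1-\beta}\to 0$ when $\beta\ge 1$, which holds since $\alpha\ge\kappa\ge 1$). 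Hence $g-a_3$ has only multiple zeros, i.e.\ $a_3$ is a totally ramified value of $g$. Combined with the ramification of $g$ at the two values coming from $S_1$ (multiplicity $\ge\alpha+1\ge 2$), $g$ would have at least three totally ramified values among which we still need one more to reach a contradiction — so in fact I would instead push the argument through $P[g]$: the implication $P[f_n]\in S_1\Rightarrow f_n\in S_2$ transfers to the statement that the zeros of $P[g]-a_i$ $(i=1,2)$ can only occur where $g$ takes one of finitely many values, at which $g$ is highly ramified, forcing $P[g]-a_i$ to have zeros of large multiplicity; together with $a_3$ being totally ramified for $g$ this over-ramifies $P[g]$ and contradicts the second fundamental theorem for $P[g]$. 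The careful bookkeeping of multiplicities of $P[g]$ in terms of $w(M_1)$, $d(M_1)$ and $\alpha$ — essentially the same lemma that underlies Theorem \ref{thm:1} — is the technical heart.

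Finally I would handle the two degenerate possibilities that the Zalcman limit can present: (a) $g$ is a nonconstant rational function (in particular the case where $g$ or $P[g]$ reduces to something explicit), which is ruled out by counting zeros and poles against the ramification constraints, exactly as in the proof of Theorem \ref{thm:1}; and (b) the exceptional configuration where the rescaling constant $\beta$ does not match $\alpha$ integrally — but this is excluded by the standing assumption that $\alpha\ge\kappa$ is an integer and the coefficients $a_i$ of the leading monomials are non-vanishing, which is precisely the hypothesis \eqref{eqn:2} carries. Assembling these, $g$ must be constant, contradicting the Zalcman lemma, so $\mathcal{F}$ is normal in $D$. The only genuinely new step beyond the proof of Theorem \ref{thm:1} is the observation that the bounded-derivative condition at $a_3$ survives rescaling as total ramification of $g$ at $a_3$; everything else is a re-run of the earlier argument with ``three omitted/ramified values'' replaced by ``two from $S_1$ plus the ramified value $a_3$.''
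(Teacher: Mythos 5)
Your proposal contains the one genuinely new idea of the paper's proof --- that the condition $\lvert f'\rvert\le M$ on the set $\{f=a_3\}$ survives rescaling as total ramification of the Zalcman limit at $a_3$ --- but the way you deploy it has two concrete defects. First, the rescaling must be taken with exponent $0$, i.e.\ $g_n(\zeta)=f_n(z_n+\rho_n\zeta)\to g(\zeta)$, not with $\rho_n^{-\alpha}$: if you divide by $\rho_n^{\alpha}$ then $g_n(\zeta)=a_3$ no longer corresponds to $f_n(z_n+\rho_n\zeta)=a_3$ (nor does $g_n=a_i$ correspond to $f_n=a_i$), so the hypothesis at $a_3$ and the multiplicity hypothesis at $a_1,a_2$ do not transfer to $g$ at all. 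Your exponent computation is also backwards: $\rho_n^{1-\beta}\to 0$ requires $\beta<1$, not $\beta\ge 1$; with $\beta=0$ one gets $g_n'=\rho_n f_n'$ and hence $g'(\zeta_0)=0$ at $a_3$-points, which is the correct statement. Second, and more seriously, you miss the dichotomy on which the whole argument turns. Total ramification of $g$ at $a_3$ together with ramification of order $\alpha+1$ at $a_1,a_2$ gives only $\sum_j(1-1/m_j)\ge \tfrac52-\tfrac{2}{\alpha+1}$, which is $\le 2$ for $\alpha=1,2$, so --- as you yourself notice --- no contradiction follows. The paper instead splits into cases: \emph{if $g$ omits both $a_1$ and $a_2$}, then $m_1=m_2=\infty$ and $m_3\ge 2$ give $\sum_j(1-1/m_j)\ge\tfrac52>2$, contradicting Nevanlinna's theorem on totally ramified values (Lemma~\ref{neva}); \emph{otherwise $g$ assumes some $a\in S_1$}, and one is exactly in the situation of the proof of Theorem~\ref{thm:1} (which needed $\#(S_1)\ge 3$ only to guarantee, via Picard, that $g$ hits $S_1$), so that proof is rerun verbatim from that point: second rescaling $h_n=(g_n-a)/\rho_n^{\alpha}$, the limit $\phi$, Claims~1 and~2, and the contradiction that $S_\nu$ is infinite yet contained in $S_2$.

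Your proposed substitute for the second half --- applying the second fundamental theorem to $P[g]$ and arguing that it is ``over-ramified'' --- is not developed and, as sketched, does not work: under the unrescaled limit $g_n\to g$ the quantities $P[f_n](z_n+\rho_n\zeta)$ do not converge to $P[g](\zeta)$ (each monomial picks up a factor $\rho_n^{w(M_i)}$), which is precisely why the paper needs the second rescaling by $\rho_n^{\alpha}$ before any differential polynomial of the limit function can be extracted. So the proposal as written has a genuine gap in both halves of the argument, even though the headline observation about $a_3$ is the right one.
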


By Remark $1,$ it follows that $S_1$ may contain zero if the differential polynomial $P$ in Theorem \ref{thm:3} is replaced by $\left(f^{(k)}\right)^m$ and $\alpha$ replaced by $k,$ where $k,~m$ are positive integers. However, we cannot take $a_1=a_2$ as the following example demonstrates:

\begin{example}
Let $a$ be any non-zero complex number and let $\mathcal{F}=\left\{f_n: n\in\mathbb{N}\right\}$ be a family of meromorphic functions in $\mathbb{D}$ given by $$f_n(z)=\frac{a}{a+e^{nz}}$$ and let $S_1=\left\{0\right\}.$ Then $P[f_n](z)=f_n'(z)\in S_1\Rightarrow f_n(z)\in S_2$ for any finite set $S_2\subset\mathbb{C}$ and $f_n$ has zeros of arbitrary multiplicity. Also, we have $\lvert f_n'(z)\rvert\leq M$ whenever $f_n(z)=1$ for any positive constant $M.$ However, the family $\mathcal{F}$ is not normal in $\mathbb{D}.$
\end{example}

%A criterion for normal meromorphic functions corresponding to Theorem \ref{thm:3} is obtained as

\begin{theorem}\label{thm:4}
Let $f\in\mathcal{M}(\mathbb{D}),$ $S_1$ and $S_2$ be two finite subsets of $\mathbb{C}$ with $S_1=\left\{a_1, a_2\right\},$ where $a_1,~a_2$ are non-zero distinct complex numbers. Let $P$ be a differential polynomial defined in (\ref{eqn:1}) and satisfying \eqref{eqn:2}. Suppose that $f-a_i~(i=1,2)$ has zeros of multiplicity at least $\alpha+1.$ If $$P\left[f\right](z)\in S_1\Rightarrow f(z)\in S_2$$ in $\mathbb{D}$ and if there exist some $M>0$ and a point $a_3\in\mathbb{C}\setminus S_1$ such that $\lvert f'(z)\rvert\leq M$ whenever $f(z)=a_3,$ then $f$ is a normal function.
\end{theorem}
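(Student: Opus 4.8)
The plan is to deduce Theorem~\ref{thm:4} from Theorem~\ref{thm:3} via the standard Zalcman--Pang rescaling machinery adapted to normal functions. First I would use the Bloch--Zalcman-type characterization of normal functions: $f$ is normal in $\mathbb{D}$ if and only if the family $\mathcal{G}=\{f\circ\psi:\psi\in\mathcal{A}(\mathbb{D})\}$ is normal in $\mathbb{D}$. So I would argue by contradiction: assume $f$ is not normal, so $\mathcal{G}$ is not normal at some point of $\mathbb{D}$, and I want to produce a violation of the hypotheses. The point is that the hypotheses of Theorem~\ref{thm:4} are \emph{conformally natural} in the right sense only after one tracks the conformal factors carefully, which is exactly the subtlety that distinguishes a normal-function statement from its normal-family counterpart (as the authors emphasize in the discussion around \cite{hayman-storvick}).

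Concretely, I would take a disk $D'$ compactly contained in $\mathbb{D}$ on which $\mathcal{G}$ fails to be normal, and apply Theorem~\ref{thm:3} in the contrapositive: since $\mathcal{F}' = \{f\circ\psi : \psi\in\mathcal{A}(\mathbb{D})\}$ restricted to $D'$ is not normal, at least one of the hypotheses of Theorem~\ref{thm:3} must fail for this family on $D'$. The multiplicity condition ``$f-a_i$ has zeros of multiplicity at least $\alpha+1$'' is preserved under composition with conformal automorphisms (a zero of $f-a_i$ of order $\geq \alpha+1$ pulls back to a zero of $(f\circ\psi)-a_i$ of the same order, since $\psi$ is biholomorphic). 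The delicate points are the two remaining hypotheses: (a) the partial-sharing condition $P[f\circ\psi](z)\in S_1 \Rightarrow (f\circ\psi)(z)\in S_2$, and (b) the boundedness condition $|(f\circ\psi)'(z)|\leq M'$ whenever $(f\circ\psi)(z)=a_3$. For (b), the chain rule gives $(f\circ\psi)'(z) = f'(\psi(z))\psi'(z)$, and since $\psi$ ranges over automorphisms of $\mathbb{D}$, the derivative $\psi'$ is uniformly bounded on any fixed compact subset $D'\Subset\mathbb{D}$; combined with $|f'|\leq M$ on $f^{-1}(a_3)$ this yields a uniform bound $M'$ on $D'$, so (b) transfers.

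The main obstacle is hypothesis (a): the differential polynomial $P$ does \emph{not} behave well under composition with $\psi$ — $P[f\circ\psi]$ is not simply $(P[f])\circ\psi$ because higher derivatives pick up powers of $\psi'$ and its derivatives (Fa\`a di Bruno). Here is where condition~\eqref{eqn:2}, and in particular the choice $1+\alpha = w(M_1)/d(M_1)$ being the \emph{maximal} weight-to-degree ratio, earns its keep: one computes that $(f\circ\psi)^{(j)}(z)$ is a polynomial in $f'(\psi),\dots,f^{(j)}(\psi)$ with coefficients built from $\psi',\dots,\psi^{(j)}$, and when one assembles $M_i[f\circ\psi]$ the total power of $\psi'$ attached to the ``leading'' term is exactly $w(M_i)$, while the degree of $f$-derivatives is $d(M_i)$. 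Thus $P[f\circ\psi]$ is, up to lower-order correction terms controlled by the $\alpha \geq \kappa$ integrality assumption, of the form $(\psi')^{1+\alpha}\cdot\big(\widetilde{P}[f]\circ\psi\big)$ for a differential polynomial $\widetilde P$ of the same shape \eqref{eqn:1}--\eqref{eqn:2}; since $\psi'$ never vanishes in $\mathbb{D}$ and the sets $S_1,S_2$ are finite, I would absorb the nonvanishing factor and verify that the partial-sharing relation for $f$ descends to a partial-sharing relation for $f\circ\psi$ with respect to possibly-rescaled (but still finite) sets, or — more cleanly — run the rescaling the other way, extracting from the non-normality of $\mathcal{G}$ a Zalcman sequence and transporting it back to $\mathcal{F}$, where Theorem~\ref{thm:3} already forbids such a degeneracy. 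Once the three hypotheses of Theorem~\ref{thm:3} are seen to hold for $\mathcal{F}'$ on $D'$, that theorem forces $\mathcal{F}'$ to be normal on $D'$, contradicting the choice of $D'$; hence $f$ is normal. The one computation I would not grind through here is the explicit Fa\`a di Bruno bookkeeping showing the exponent of $\psi'$ is governed precisely by $w(M_i)$ and that the correction terms have strictly smaller weight-to-degree ratio — this is routine but is the technical heart, and it is exactly the place where the hypothesis \eqref{eqn:2} together with $\alpha\geq\kappa$ an integer is used.
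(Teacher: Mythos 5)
Your reduction of Theorem~\ref{thm:4} to Theorem~\ref{thm:3} via the family $\mathcal{F}'=\{f\circ\psi:\psi\in\mathcal{A}(\mathbb{D})\}$ breaks down at exactly the step you defer as ``routine bookkeeping''. By Fa\`a di Bruno, the leading term of $M_i[f\circ\psi]$ is $(\psi')^{\,w(M_i)-d(M_i)}\,\bigl(M_i[f]\circ\psi\bigr)$ (note the exponent is $w(M_i)-d(M_i)=\sum_j j\,n_j$, not $w(M_i)$ and not $1+\alpha$), \emph{plus} correction terms involving $\psi'',\psi''',\dots$ and lower-order derivatives of $f$. These corrections are genuinely present and do not vanish, so $P[f\circ\psi]$ is \emph{not} a nonvanishing multiple of $P[f]\circ\psi$; it is a different differential polynomial applied to $f$, about whose value set the hypothesis $P[f](z)\in S_1\Rightarrow f(z)\in S_2$ says nothing. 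The assumption that $\alpha\geq\kappa$ is an integer plays no role in taming these corrections (in the paper it is there so that the Zalcman--Pang rescaling with exponent $\alpha$ and the use of $\phi^{(\alpha)}$ make sense). Even discarding the corrections, the factor $(\psi')^{w(M_i)-d(M_i)}$ at a fixed point of $D'$ is bounded above by Schwarz--Pick but \emph{not} bounded away from $0$ as $\psi$ ranges over $\mathcal{A}(\mathbb{D})$, so ``absorbing'' it into a rescaled finite set would require a different set $S_1$ for each $\psi$, whereas Theorem~\ref{thm:3} needs one fixed pair $(S_1,S_2)$ for the whole family. Hence the contrapositive application of Theorem~\ref{thm:3} to $\mathcal{F}'$ cannot be completed as described.

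The paper's route avoids composing with automorphisms altogether: it applies the Lohwater--Pommerenke lemma (Lemma~\ref{lem:1}), which produces from a non-normal $f$ points $z_n\in\mathbb{D}$ and $\rho_n\to 0$ with $g_n(\zeta)=f(z_n+\rho_n\zeta)\to g$ non-constant and meromorphic on $\mathbb{C}$. Because this rescaling is affine, $g_n^{(j)}(\zeta)=\rho_n^{\,j}f^{(j)}(z_n+\rho_n\zeta)$, and all three hypotheses transfer to the limit by Hurwitz exactly as in the proofs of Theorems~\ref{thm:1} and~\ref{thm:3}: the bound $\lvert f'\rvert\leq M$ on $f^{-1}(a_3)$ forces every $a_3$-point of $g$ to be multiple, so by Nevanlinna's ramification theorem (Lemma~\ref{neva}) $g$ cannot omit both $a_1$ and $a_2$, and the rest is the argument of Theorem~\ref{thm:1}. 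That is the ``minor modification'' the paper alludes to. Your parenthetical suggestion to ``run the rescaling the other way'' gestures toward this, but as written your proof does not contain it.
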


Finally, we have another criterion for normal meromorphic functions and is of independent interest.

\begin{theorem}\label{thm:5}
Let $f\in\mathcal{M}(\mathbb{D}),$ $S_1=\left\{a_1, a_2, a_3\right\}$ and $S_2=\left\{b_1,b_2,b_3\right\}$ be two finite subsets of $\mathbb{C}.$ Let $l, m\in\mathbb{N}$ and $a$ be any fixed complex number. Then $f$ is a normal function if any one of the following holds:
\begin{itemize}
\item[$(i)$] $\left(f^{(l)}\right)^m(z)\in S_1\Leftrightarrow f(z)\in S_2,$ and $f-a$ has zeros and poles of multiplicity at least $l+1,$ if $a\in S_1.$

\item[$(ii)$] $f(z)\in S_1\Rightarrow \left(f^{(l)}\right)^m(z)\in S_2,$ and $f-a$ has zeros and poles of multiplicity at least $l+2,$ if $a\notin S_1.$
\end{itemize}
\end{theorem}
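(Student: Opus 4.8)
The plan is to prove each part by contradiction, using the Zalcman--Pang type rescaling lemma for non-normal functions (Lemma \ref{lem:1}) and then deriving a contradiction from the value distribution of the resulting limit function on $\mathbb{C}$.

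\textbf{Part (i).} Here $P:=\bigl(f^{(l)}\bigr)^m$ is a normalized differential monomial with $w(P)/d(P)=l+1$, so that in the notation of \eqref{eqn:2} we have $\alpha=l=\kappa$ and the (unit) coefficient is non-vanishing. Since $\#(S_1)=3$, since $f-a$ has zeros of multiplicity $\ge l+1=\alpha+1$ for each $a\in S_1$, and since $P[f](z)\in S_1\Rightarrow f(z)\in S_2$ is contained in the assumed equivalence, Theorem \ref{thm:2} applies directly and yields that $f$ is normal. If one prefers a self-contained argument, suppose $f$ is not normal; normality being invariant under translation, choose $a_{*}\in S_1$ — with $a_{*}\notin S_2$ whenever $S_1\ne S_2$ — and apply Lemma \ref{lem:1} to $f-a_{*}$, whose zeros all have multiplicity $\ge l+1$, so that $(f-a_{*})^{(l)}$ vanishes at them. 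One obtains $z_n\to z_0\in\partial\mathbb{D}$, $\rho_n\to0^{+}$ with $\rho_n/(1-|z_n|^2)\to0$, and a non-constant meromorphic function $G$ of finite order on $\mathbb{C}$ whose zeros all have multiplicity $\ge l+1$, such that
$$G_n(\zeta):=\rho_n^{-l}\bigl(f(z_n+\rho_n\zeta)-a_{*}\bigr)\longrightarrow G(\zeta)$$
locally uniformly, whence $G_n^{(l)}=f^{(l)}(z_n+\rho_n\,\cdot\,)\to G^{(l)}$ and $\bigl(f^{(l)}\bigr)^m(z_n+\rho_n\,\cdot\,)\to\bigl(G^{(l)}\bigr)^m$. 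A Hurwitz argument combined with the sharing hypothesis shows that $\bigl(G^{(l)}\bigr)^m$ can take a value of $S_1$ only at a zero of $G$, and only if that value is $0$; hence $G^{(l)}$ omits at least three distinct finite values and Picard's theorem gives a contradiction, except in the single degenerate case $0\in S_1=S_2$. In that case one checks instead that the zeros of $G^{(l)}$ coincide with those of $G$, so $G^{(l)}$ omits two finite values while its poles (lying over poles of $G$) have multiplicity $\ge l+1$; the Second Main Theorem applied to $G^{(l)}$ then yields $T(r,G^{(l)})\le\frac{1}{l+1}T(r,G^{(l)})+S(r,G^{(l)})$, which is impossible.

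\textbf{Part (ii).} Suppose $f$ is not normal and apply Lemma \ref{lem:1} with weight $l+1$: since $f-a$ has zeros and poles of multiplicity $\ge l+2$, the derivative $(f-a)^{(l+1)}$ vanishes at the zeros of $f-a$, and one obtains $z_n$, $\rho_n$ as above together with
$$G_n(\zeta):=\rho_n^{-(l+1)}\bigl(f(z_n+\rho_n\zeta)-a\bigr)\longrightarrow G(\zeta),$$
$G$ a non-constant meromorphic function of finite order on $\mathbb{C}$ all of whose zeros and poles have multiplicity $\ge l+2$; moreover $f^{(l)}(z_n+\rho_n\,\cdot\,)=\rho_n G_n^{(l)}\to0$. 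The crucial point is that an $a_i$-point of $f$ with $a_i\in S_1$ cannot survive in the limit — near such a point $G_n\to\infty$ locally uniformly, incompatible with $G\not\equiv\infty$ — so the hypothesis $f(z)\in S_1\Rightarrow\bigl(f^{(l)}\bigr)^m(z)\in S_2$ either becomes vacuous on the rescaled region or only restricts $G$ at its zeros. Either way one is left with a non-constant meromorphic function on $\mathbb{C}$ whose set of totally ramified values (comprising $0$ and $\infty$, each of multiplicity $\ge l+2$, together with any value forced by the residual sharing information) exceeds the bound permitted by Nevanlinna's theorem on totally ramified values — and in the borderline situations one replaces this count with a quantitative Second Main Theorem estimate for $G$ as in Part (i). This contradiction shows that $f$ is normal.

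\textbf{Expected main obstacle.} The genuinely delicate steps are transferring the \emph{one-sided} sharing relation faithfully through the (weighted) rescaling — the point $0$, and whether it lies in $S_1$ or in $S_2$, forces the argument to split into several distinct sub-cases, each of which has to be closed — and the borderline multiplicity bookkeeping, where the naive Picard/ramification count is tight precisely for small $l$ and must be upgraded to a Second Main Theorem estimate exploiting the prescribed multiplicities of the poles of $G$ (respectively of $G^{(l)}$). I expect the degenerate sub-cases of Part (ii) to require the most care.
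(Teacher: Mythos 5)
There are genuine gaps in both parts. For part (i): the theorem's multiplicity hypothesis is attached to \emph{one} fixed point $a\in S_1$, not to every element of $S_1$ (the paper's proof immediately sets $a=a_1$ and uses the multiplicity of $f-a_1$ only). Consequently Theorem \ref{thm:2} does not apply directly (it needs the zero-multiplicity condition for all of $S_1$), and your fallback argument also fails at the start: you are not free to choose $a_{*}\in S_1\setminus S_2$, since the only point of $S_1$ at which $f-a_{*}$ is known to have multiple zeros is the given $a$, which may well lie in $S_2$. A second problem is the rescaling itself: Lemma \ref{lem:1} produces the \emph{unweighted} limit $h(\zeta)=\lim f(z_n+\rho_n\zeta)-a$; the weighted limit $\rho_n^{-l}\bigl(f(z_n+\rho_n\zeta)-a\bigr)\to G$ is not available from the lemmas at hand and in the paper is manufactured by a second, Zalcman--Pang rescaling of $h_n/\rho_n^{l}$ — which is possible only when $h$ actually has a zero. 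The complementary case, $h$ omitting $0$, is where your argument has no counterpart: there the paper must use the reverse implication $f(z)\in S_2\Rightarrow\left(f^{(l)}\right)^m(z)\in S_1$ to deduce $h(\zeta)=b_i-a\Rightarrow h^{(l)}(\zeta)=0$ for $i=1,2,3$ and then invoke Lemma \ref{lem:7}; your proposal never uses that direction of the sharing at all, so this case cannot be closed.

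Part (ii) as sketched does not work. The assertion that an $a_i$-point of $f$ ``cannot survive in the limit'' because $G_n\to\infty$ there is ``incompatible with $G\not\equiv\infty$'' is false: a sequence of points where $G_n\to\infty$ simply produces a pole of $G$ (or escapes every compact set), which is entirely consistent with $G$ being a non-constant meromorphic function. And the ramification count you fall back on is not a contradiction: two totally ramified values ($0$ and $\infty$) of multiplicity at least $l+2$ give $\sum(1-1/m_j)=2-\tfrac{2}{l+2}<2$, comfortably within Nevanlinna's bound, while for $l\geq 2$ the conclusion $h^{(l)}=0$ at an $(a_i-a)$-point does not make that value totally ramified, so no further ramified values are ``forced.'' The paper's actual route is different and essential: keep the unweighted limit $h=\lim\bigl(f(z_n+\rho_n\zeta)-a\bigr)$, observe that $f(z_n+\rho_n\zeta_n)=a_i\in S_1$ forces $\left(f^{(l)}\right)^m(z_n+\rho_n\zeta_n)\in S_2$ to be bounded, hence $\left(h^{(l)}\right)^m(\zeta_0)=\lim\rho_n^{lm}\left(f^{(l)}\right)^m=0$, so that $h(\zeta)=a_i-a\Rightarrow h^{(l)}(\zeta)=0$ for three nonzero targets; then Lemma \ref{lem:7} together with the multiplicity $\geq l+2$ of the zeros and poles of $h$ yields $3\,T(r,h)<\bigl(\tfrac{2}{l+2}+2\bigr)T(r,h)+S(r,h)$, the desired contradiction. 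You correctly anticipated that the delicate point is transferring the one-sided sharing through the rescaling, but the mechanism you propose for doing so does not produce the needed estimate.
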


\section{Auxiliary Results}
 In this section, we describe some preliminary results that are crucial to prove main results of this paper. First recall that if $f\in\mathcal{M}(\mathbb{C})$ and $a\in\mathbb{C}_{\infty},$ then $a$ is said to be a totally ramified value of $f$ if $f-a$ has no simple zeros. Nevanlinna (see \cite[p. 84]{bergweiler}) proved the following widely known result concerning multiplicities of $a$-points of a meromorphic function. This result plays a major role in the proofs of Theorem \ref{thm:3} and Theorem \ref{thm:4}:
 
\begin{lemma}[Nevanlinna's Theorem]\label{neva} Let $f$ be a non constant meromorphic function, $a_1, a_2, \ldots, a_q\in\mathbb{C}_{\infty}$ and $m_1, m_2,\ldots, m_q\in\mathbb{N}.$ Suppose that all $a_j$-points of $f$ have multiplicity at least $m_j,$ for $j=1,2,\ldots, q.$ Then $$\sum\limits_{j=1}^{q}\left(1-\frac{1}{m_j}\right)\leq 2.$$ If $f$ omits the value $a_j$, then $m_j=\infty.$
\end{lemma}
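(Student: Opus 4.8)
The plan is to derive this ramification inequality from Nevanlinna's Second Fundamental Theorem (SMT), which is the classical route. Writing $\overline{N}(r,a_j,f)$ for the reduced counting function of $a_j$-points of $f$ (each counted once, ignoring multiplicity) and $S(r,f)$ for the usual error term, the SMT in the form I would invoke reads
$$(q-2)\,T(r,f)\le \sum_{j=1}^{q}\overline{N}(r,a_j,f)+S(r,f),$$
valid for distinct $a_1,\dots,a_q\in\mathbb{C}_\infty$, where $S(r,f)=o(T(r,f))$ as $r\to\infty$ outside a set of finite Lebesgue measure when $f$ is transcendental. This form already absorbs the non-negative ramification term of the usual statement (dropping it only weakens the right-hand side), so it is exactly what I need. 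My goal is to show the multiplicity hypothesis forces the right-hand counting terms to be small compared with $T(r,f)$.

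The key estimate is the passage from reduced to full counting functions. Since every $a_j$-point of $f$ has multiplicity at least $m_j$, each distinct $a_j$-point is counted at least $m_j$ times in the unreduced counting function $N(r,a_j,f)$, so
$$\overline{N}(r,a_j,f)\le \frac{1}{m_j}\,N(r,a_j,f)\le \frac{1}{m_j}\,T(r,f)+O(1),$$
the last step being the First Fundamental Theorem $N(r,a_j,f)\le T(r,f)+O(1)$. If $f$ omits $a_j$ then $\overline{N}(r,a_j,f)=0$, which is precisely the convention $m_j=\infty$, i.e. $1/m_j=0$, so that case is absorbed automatically. Substituting into the SMT and collecting terms gives
$$(q-2)\,T(r,f)\le \Bigl(\sum_{j=1}^{q}\frac{1}{m_j}\Bigr)T(r,f)+S(r,f)+O(1).$$

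To finish for transcendental $f$, I would divide through by $T(r,f)$ and let $r\to\infty$ along a sequence avoiding the exceptional set; since $S(r,f)/T(r,f)\to 0$ and $T(r,f)\to\infty$, this yields $q-2\le\sum_{j}1/m_j$, which rearranges to the desired $\sum_{j}\bigl(1-1/m_j\bigr)\le 2$. The one genuinely separate case — and the main point to be careful about — is when $f$ is rational: then $T(r,f)\sim d\log r$ for $d=\deg f$ while $S(r,f)=O(\log r)$, so the error term cannot simply be divided away. Here I would argue directly by Riemann--Hurwitz: the map $f:\mathbb{C}_\infty\to\mathbb{C}_\infty$ satisfies $\sum_P(e_P-1)=2d-2$, while over each $a_j$ the ramification contributes at least $d-d/m_j=d(1-1/m_j)$, since there are at most $d/m_j$ distinct $a_j$-points, each of multiplicity at least $m_j$, and their multiplicities sum to $d$. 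Summing over $j$ gives $d\sum_j(1-1/m_j)\le 2d-2$, hence $\sum_j(1-1/m_j)\le 2-2/d<2$, which is in fact the stronger strict inequality. Apart from this bookkeeping, the only remaining subtlety is the standard caveat that the SMT inequality holds only outside an exceptional set of finite measure, but this is harmless since we need the estimate only along one unbounded sequence of radii.
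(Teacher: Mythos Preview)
Your argument is correct and is the standard derivation of Nevanlinna's ramification inequality from the Second Fundamental Theorem, together with the Riemann--Hurwitz count in the rational case. The paper itself does not supply a proof of this lemma at all: it is stated as a preliminary result and simply attributed to Nevanlinna with a reference to \cite[p.~84]{bergweiler}. So there is no paper proof to compare against; you have supplied precisely the classical justification the paper omits.

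One minor remark on the rational case: since a non-constant rational map $f:\mathbb{C}_\infty\to\mathbb{C}_\infty$ is surjective, the omitted-value convention $m_j=\infty$ never actually arises there, so your Riemann--Hurwitz computation covers everything needed without further comment.
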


We need the following rescaling lemma due to Lohwater and Pommerenke \cite[Theorem 1]{lohwater}:

\begin{lemma}\label{lem:1}
Let $f\in\mathcal{M}(\mathbb{D}).$ Suppose that $f$ is not a normal function. Then there exist points $z_n\in\mathbb{D}$ and positive numbers $\rho_n\longrightarrow 0$ such that $$g_n(\zeta)=f(z_n+\rho_n\zeta)\longrightarrow g(\zeta)$$ locally uniformly in $\mathbb{C}$ with respect to the spherical metric, where $g$ is a non-constant meromorphic function on $\mathbb{C}.$
\end{lemma}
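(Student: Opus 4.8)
The statement is the classical rescaling lemma of Lohwater and Pommerenke, and the natural route is a Zalcman-type extremal construction driven by Noshiro's criterion quoted in the introduction. The plan is as follows. First I would translate the non-normality of $f$ into an analytic statement: by Noshiro's criterion, $f$ fails to be normal precisely when $\sup_{z\in\mathbb{D}}(1-|z|^2)f^{\#}(z)=\infty$, where $f^{\#}=|f'|/(1+|f|^2)$ denotes the spherical derivative. Since $1-|z|^2\le 2(1-|z|)$, this also gives $\sup_{z\in\mathbb{D}}(1-|z|)f^{\#}(z)=\infty$, which is the form convenient for the construction.

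Next comes the extremal step. Fixing an increasing sequence of radii $t_n\uparrow 1$, I would consider the continuous function $z\mapsto (t_n-|z|)f^{\#}(z)$ on the closed disk $\overline{D(0,t_n)}$. It vanishes on the boundary circle, so it attains a maximum $L_n$ at some interior point $z_n$ with $|z_n|<t_n$. Comparing with any fixed point shows $\liminf_n L_n\ge (1-|z_0|)f^{\#}(z_0)$ for every $z_0\in\mathbb{D}$, whence $L_n\to\infty$ by the supremum computed in the first step. I would then set $\rho_n:=1/f^{\#}(z_n)=(t_n-|z_n|)/L_n$, so that $\rho_n\le 1/L_n\to 0$, and define $g_n(\zeta):=f(z_n+\rho_n\zeta)$. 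A short check shows that for each fixed $r>0$ the function $g_n$ is defined on $D(0,r)$ once $L_n>r$, since $z_n+\rho_n\zeta$ then remains inside $D(0,t_n)$.

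The heart of the argument is the uniform bound on the rescaled spherical derivatives. For $|\zeta|\le r$ and $w=z_n+\rho_n\zeta$ one has $t_n-|w|\ge (t_n-|z_n|)(1-r/L_n)$, while the maximality of $L_n$ gives $(t_n-|w|)f^{\#}(w)\le L_n=(t_n-|z_n|)f^{\#}(z_n)$. Combining these, and using $g_n^{\#}(\zeta)=\rho_n f^{\#}(w)$ together with $\rho_n f^{\#}(z_n)=1$, yields $g_n^{\#}(\zeta)\le (1-r/L_n)^{-1}$, which tends to $1$ as $n\to\infty$. Hence the spherical derivatives $\{g_n^{\#}\}$ are locally uniformly bounded on $\mathbb{C}$, while $g_n^{\#}(0)=1$ for every $n$. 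By Marty's theorem and a diagonal extraction over $r=1,2,\ldots$, a subsequence of $(g_n)$ converges locally uniformly on $\mathbb{C}$ in the spherical metric to a limit $g$ that is either meromorphic or identically $\infty$.

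Finally I would verify that $g$ is a genuine non-constant meromorphic function. The normalization $g_n^{\#}(0)=1$ is decisive: if $g\equiv\infty$, then $1/g_n\to 0$ locally uniformly in the Euclidean sense near $0$, whence $g_n^{\#}=(1/g_n)^{\#}\to 0$ there, contradicting $g_n^{\#}(0)=1$; and once $g\not\equiv\infty$ the convergence of spherical derivatives forces $g^{\#}(0)=1\ne 0$, so $g$ cannot be constant. The main obstacle I anticipate is precisely this last non-degeneracy check, that is, controlling the spherical derivative in the limit (including the case where $g$ has a pole at $0$, at which $g^{\#}$ is still defined and the convergence still applies to $1/g$). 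Everything else is the routine bookkeeping of the extremal construction, whose only delicate point is confirming $L_n\to\infty$ from the first step.
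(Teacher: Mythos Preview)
Your argument is correct and is essentially the standard Zalcman-type proof of the Lohwater--Pommerenke lemma: the extremal choice of $z_n$ via the maximum of $(t_n-|z|)f^{\#}(z)$, the scale $\rho_n=1/f^{\#}(z_n)$, the local bound $g_n^{\#}(\zeta)\le(1-r/L_n)^{-1}$, Marty's theorem, and the non-degeneracy check $g^{\#}(0)=1$ are all handled cleanly. One cosmetic point: your inequality $\rho_n\le 1/L_n$ really uses $t_n-|z_n|<1$, which is fine.

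The paper, however, does not prove this lemma at all: it simply quotes it as the rescaling lemma due to Lohwater and Pommerenke \cite{lohwater} and uses it as a black box. So there is no ``paper's own proof'' to compare against; you have supplied a full (and correct) proof where the authors merely give a citation. If anything, your write-up is more informative than what the paper provides for this statement.
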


The proofs of main results in this paper rely essentially on the following extension of the famous Zalcman-Pang Lemma due to Chen and Gu \cite{chen-gu} (see also \cite[p. 216]{zalcman-2}, cf. \cite[Lemma 2]{pang-zalcman}).

\begin{lemma}[Zalcman-Pang Lemma]\label{lem:2}
Let $\mathcal{F}\subset\mathcal{M}(D)$ be a family of meromorphic functions all of whose zeros have multiplicities at least $m$ and whose poles have multiplicities at least $p.$ Let $-p<\alpha<m.$ If $\mathcal{F}$ is not normal at $z_0\in D,$ then there exist
sequences $\left\{f_n\right\}\subset\mathcal{F},$ $\left\{z_n\right\}\subset D$ satisfying $z_n\longrightarrow z_0$ and positive numbers $\rho_n$ with $\rho_n\longrightarrow 0$ such that the sequence $\left\{g_n\right\}$ defined by $$g_n(\zeta)=\rho_{n}^{-\alpha}f_n(z_n+\rho_n\zeta)\longrightarrow g(\zeta)$$ locally uniformly in $\mathbb{C}$ with respect to the spherical metric, where $g$ is a non-constant meromorphic function on $\mathbb{C}$ such that for every $\zeta\in\mathbb{C},$ $g^{\#}(\zeta)\leq g^{\#}(0)=1.$
 \end{lemma}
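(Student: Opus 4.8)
The plan is to prove this rescaling lemma by a Zalcman-type renormalization argument built on Marty's criterion, with the weight $\rho_n^{-\alpha}$ and the multiplicity hypotheses used to guarantee that the limit is a genuine non-constant meromorphic function. Since normality is a local property, I would fix $z_0$ and a closed disk $\overline{D(z_0,\delta)}\subset D$ on which $\mathcal{F}$ fails to be normal, and translate so that $z_0=0$. By Marty's criterion, non-normality at $z_0$ is equivalent to the spherical derivatives $f^{\#}$ failing to be uniformly bounded near $z_0$; hence there is a sequence $\{f_n\}\subset\mathcal{F}$ and points tending to $z_0$ at which $f_n^{\#}$ blows up. The whole difficulty is to convert this crude blow-up into a clean rescaling that simultaneously carries the prescribed weight $\rho_n^{-\alpha}$ and the normalization $g_n^{\#}(0)=1$.

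The core of the argument is the maximal selection. I would introduce a weight that vanishes on the boundary circle, e.g. multiply $f_n^{\#}$ by $(\delta^2-|z|^2)$ (in the appropriate power), and let $z_n\in\overline{D(z_0,\delta)}$ be a point where the resulting continuous functional attains its maximum $M_n$; the blow-up of $f_n^{\#}$ forces $M_n\to\infty$, and in particular $z_n\to z_0$. I would then define $\rho_n$ by the normalization requirement $g_n^{\#}(0)=1$ for $g_n(\zeta)=\rho_n^{-\alpha}f_n(z_n+\rho_n\zeta)$; because $M_n\to\infty$ this yields $\rho_n\to 0^{+}$. The point of choosing $z_n$ extremally is the standard two-factor estimate: writing $g_n^{\#}(\zeta)$ as a product of a ratio of weights (which tends to $1$ uniformly on each disk $|\zeta|\le R$ since $\rho_n\zeta\to 0$) and a ratio of the functional values at $z_n+\rho_n\zeta$ and at $z_n$ (which is $\le 1$ by maximality), one obtains $g_n^{\#}(\zeta)\le 1+o(1)$ locally uniformly. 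By Marty's criterion the family $\{g_n\}$ is then normal on $\mathbb{C}$, so a subsequence converges locally uniformly in the spherical metric to a limit $g$, and the bound passes to the limit to give $g^{\#}(\zeta)\le g^{\#}(0)=1$.

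The genuinely delicate step, and the place where the hypotheses $-p<\alpha<m$ are indispensable, is showing that $g$ is a non-constant \emph{meromorphic} function rather than a degenerate limit. Since $g^{\#}(0)=1\neq 0$, the limit cannot be a finite constant, and the uniform bound on $g_n^{\#}$ rules out $g\equiv\infty$; the real risk is that the weight $\rho_n^{-\alpha}$ drives $g_n$ to $0$ or to $\infty$ identically. I would control this by a local analysis near the zeros and poles of $f_n$: around a zero of multiplicity $k\ge m$ the scaling produces a factor $\rho_n^{\,k-\alpha}$ with $k-\alpha>0$ (using $\alpha<m\le k$), while around a pole of multiplicity $k\ge p$ the relevant exponent is governed by $\alpha>-p$, and these two inequalities are exactly what forces the zero/pole structure of $f_n$ to survive the rescaling in a balanced way, so that $g$ inherits zeros of multiplicity $\ge m$ and poles of multiplicity $\ge p$ without collapsing. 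I expect this non-degeneracy --- tying the admissible range of $\alpha$ to the multiplicities through the scaling exponents --- to be the main obstacle; once it is in place, together with $g^{\#}(0)=1$ it certifies that $g$ is the required non-constant meromorphic function. The case $\alpha=0$ reduces to the classical Zalcman lemma and provides the template for all the estimates above.
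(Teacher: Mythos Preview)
The paper does not supply a proof of this lemma; it is quoted as a known result due to Chen and Gu, with pointers to Zalcman's survey and to Pang--Zalcman. So there is no ``paper's own proof'' to compare against here.

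Your sketch follows the standard Zalcman renormalization strategy and is correct in its broad outline: Marty's criterion to detect blow-up, a maximal selection to locate $z_n$, a rescaling to normalize, compactness from the uniform spherical-derivative bound, and the non-degeneracy of the limit from $g^{\#}(0)=1$. One technical point is glossed over. For $\alpha\neq 0$ the two-step procedure you describe---first pick $z_n$ by maximizing a weighted $f_n^{\#}$, then solve for $\rho_n$ from $g_n^{\#}(0)=1$---does not separate cleanly, because
\[
g_n^{\#}(0)=\frac{\rho_n^{\,1-\alpha}\,|f_n'(z_n)|}{1+\rho_n^{-2\alpha}\,|f_n(z_n)|^{2}}
\]
depends on $\rho_n$ in a way that couples it to the position of $z_n$ relative to the zeros and poles of $f_n$; it is not clear that a solution with $\rho_n\to 0$ exists, nor that the resulting $g_n^{\#}(\zeta)$ is controlled by the maximality you invoked for $z_n$ alone. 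In the published proofs the selection of $z_n$ and $\rho_n$ is made jointly, via a functional that already encodes the exponent $\alpha$, and the hypothesis $-p<\alpha<m$ is used exactly to make that functional continuous (zeros of multiplicity $\ge m$ and poles of multiplicity $\ge p$ kill the potential singularities introduced by the $\alpha$-weight). Once that coupled selection is in place, the rest of your outline goes through as you describe.
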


The following lemma due to Doeringer \cite[Lemma 1 (i)]{doeringer} is an extension of the Nevanlinna's well known Lemma on the Logarithmic Derivative \cite[Lemma 2.3]{hayman}.

\begin{lemma}\label{lem:3}
Let $f\in\mathcal{M}(\mathbb{C})$ and $P[f]$ be a differential polynomial of $f$ with meromorphic coefficients $a_j,~j=1,\ldots, k.$ Then $$m\left(r, P[f]\right)\leq d(Q)\cdot m(r,f) + \sum\limits_{j=1}^{k} m(r, a_j) + S(r,f).$$
\end{lemma}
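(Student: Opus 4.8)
The plan is to reduce the estimate to the Lemma on the Logarithmic Derivative (LLD) by first extracting from $P[f]$ the dominant power of $f$. Writing $P=\sum_{i=1}^{m} a_i M_i$ with $M_i=\prod_{j}\left(f^{(j)}\right)^{n_{ij}}$ and $d(M_i)=\sum_j n_{ij}\le d(P)$, I would begin by rewriting each monomial in terms of logarithmic derivatives, namely $M_i=f^{\,d(M_i)}\prod_j\left(f^{(j)}/f\right)^{n_{ij}}$, so that the only genuine powers of $f$ appearing are the factors $f^{\,d(M_i)}$.

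The crucial step---and the one that produces the sharp coefficient $d(P)=\max_i d(M_i)$ rather than the wasteful $\sum_i d(M_i)$---is to bound these powers uniformly \emph{before} taking logarithms. Pointwise on $\lvert z\rvert=r$ one has $\lvert f\rvert^{d(M_i)}\le\max(\lvert f\rvert,1)^{d(P)}$ for every $i$, since $d(M_i)\le d(P)$. This lets me factor the common quantity $\max(\lvert f\rvert,1)^{d(P)}$ out of the entire sum:
\begin{equation*}
\lvert P[f]\rvert\le \max(\lvert f\rvert,1)^{d(P)}\sum_{i=1}^{m}\lvert a_i\rvert\prod_{j}\left\lvert \frac{f^{(j)}}{f}\right\rvert^{n_{ij}}.
\end{equation*}
Applying $\log^+$ and using $\log^+(AB)\le\log^+A+\log^+B$ together with subadditivity of $\log^+$ over the $m$-term sum (at the cost of an additive $\log m$), the powers of $\lvert f\rvert$ are never added together, and I obtain a pointwise bound whose terms are $d(P)\log^+\lvert f\rvert$ (since $\max(\lvert f\rvert,1)\ge 1$), the coefficient contributions $\log^+\lvert a_i\rvert$, and the logarithmic-derivative contributions $n_{ij}\log^+\lvert f^{(j)}/f\rvert$.

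Integrating over $\lvert z\rvert=r$ then splits the right-hand side into the three advertised pieces: the leading term integrates to $d(P)\,m(r,f)$; the coefficient terms give $\sum_j m(r,a_j)$; and each $m\!\left(r,f^{(j)}/f\right)$ is absorbed into $S(r,f)$ by LLD in the form valid for an arbitrary meromorphic $f$ regardless of the coefficients, which is precisely where Doeringer's extension is invoked. I expect the only genuine obstacle to be the bookkeeping that yields $d(P)$ and not $\sum_i d(M_i)$; this is resolved exactly by the pointwise factoring of $\max(\lvert f\rvert,1)^{d(P)}$ described above, after which the argument is routine Nevanlinna estimation.
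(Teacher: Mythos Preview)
The paper does not supply its own proof of this lemma: it is quoted verbatim as ``due to Doeringer \cite[Lemma~1~(i)]{doeringer}'' and used as a black box. So there is nothing in the paper to compare against.

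Your argument is the standard one and is correct. The key device---replacing each $\lvert f\rvert^{d(M_i)}$ by $\max(\lvert f\rvert,1)^{d(P)}$ \emph{before} summing---is exactly what pins the coefficient down to $d(P)$ rather than $\sum_i d(M_i)$, and after that the subadditivity of $\log^+$ and the higher-order logarithmic-derivative lemma $m\!\left(r,f^{(j)}/f\right)=S(r,f)$ finish the job. One small phrasing issue: your last sentence says this step ``is precisely where Doeringer's extension is invoked,'' but the lemma you are proving \emph{is} Doeringer's extension; what you actually invoke there is the classical LLD (iterated, or in its higher-derivative form), not Doeringer. Apart from that wording, the proof is complete.
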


We also need the following lemma in the proofs of Theorem \ref{thm:1} and Theorem \ref{thm:2}.

\begin{lemma}\label{lem:4}
Let $f\in\mathcal{M}(\mathbb{C})$ be such that $f$ has finitely many zeros and let $P[f]$ be a differential polynomial of $f$ with coefficients $a_j,~j=1,\ldots, k$ such that $T(r, a_j)=S(r, f).$  If $P[f]$ is constant, then either $f$ is a polynomial or $P[f]\equiv 0.$
\end{lemma}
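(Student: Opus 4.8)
The plan is to argue by contradiction: assume $f$ is not a polynomial, that $P[f]\equiv c$ for a constant $c$, and that $c\neq 0$, and derive a contradiction; this shows that $f$ not a polynomial forces $c=0$, i.e.\ $P[f]\equiv 0$. First I would dispose of the rational case. If $f$ is a non-polynomial rational function then $T(r,f)=O(\log r)$, so $S(r,f)=o(\log r)$, and the hypothesis $T(r,a_j)=S(r,f)$ forces every $a_j$ to be a constant; near a finite pole $z_0$ of $f$ of order $q$ each monomial $M_i[f]$ has a pole of order $w(M_i)+(q-1)d(M_i)$, and matching the principal parts in $P[f]\equiv c$ — the surviving coefficients being nonzero constants — shows that $P[f]$ cannot be a nonzero constant. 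So from now on $f$ is transcendental, and since $f$ has finitely many zeros, $N(r,1/f)=O(\log r)=S(r,f)$.

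Next I would bring in the finitely-many-zeros hypothesis structurally. Let $p$ be the monic polynomial with the same zeros (counted with multiplicity) as $f$; then $E:=p/f$ is entire and zero-free, $f=p/E$, the function $f$ is transcendental iff $E$ is, and $T(r,E)=T(r,f)+S(r,f)$, so $S(r,E)=S(r,f)$. Writing $f^{(j)}=R_j/E^{\,j+1}$ with $R_j$ entire (from $R_0=p$, $R_{j+1}=R_j'E-(j+1)R_jE'$) gives $M_i[f]=S_i/E^{\,w(M_i)}$ with $S_i=\prod_j R_j^{\,n_{ij}}$, and multiplying $P[f]\equiv c$ by $E^{\,W}$, $W:=w(P)$, turns it into an identity between entire functions,
\[
c\,E^{\,W}=\sum_{i=1}^{m}a_i\,S_i\,E^{\,W-w(M_i)}=:\Phi\bigl(z,E,E',\dots,E^{(\kappa)}\bigr).
\]
The point at which the weight normalisation \eqref{eqn:2} and the definition of $w(P)$ enter is the bookkeeping identity $R_j=E^{\,j}\rho_j$, where $\rho_j$ is a polynomial in $p,p',\dots$ and in the logarithmic derivatives $E'/E,\dots,E^{(j)}/E$; hence $S_i=E^{\,w(M_i)-d(M_i)}\sigma_i$, the $i$-th term of $\Phi$ equals $a_i\sigma_i\,E^{\,W-d(M_i)}$, and therefore $\Phi$ is a differential polynomial in $E$ of degree at most $W-1$ (since $d(M_i)\ge1$) whose coefficients — products of the $a_i$, of the $\sigma_i$ (which satisfy $m(r,\sigma_i)=S(r,E)$ by the lemma on the logarithmic derivative, cf.\ Lemma \ref{lem:3}), and of polynomials in $z$ — all have characteristic $S(r,E)$.

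It then remains to deduce from $c\,E^{\,W}=\Phi$ with $c\neq0$ and $\deg_E\Phi\le W-1$ that $T(r,E)=S(r,E)$, which is impossible for a transcendental $E$. Isolating the part of $\Phi$ of lowest degree $d_{\min}:=\min_i d(M_i)$ rearranges the identity into a form to which a Clunie-type estimate applies, $E^{\,W-d_{\min}}\,G=H$, where $G$ carries the contributions with $d(M_i)=d_{\min}$ together with $c\,E^{\,d_{\min}}$ and $H$ is a differential polynomial in $E$ of degree at most $W-d_{\min}-1$; Clunie's lemma then gives $m(r,G)=S(r,E)$, which forces $d_{\min}\,T(r,E)\le S(r,E)$, the required contradiction. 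The delicate point is that the factors $E^{\,W-w(M_i)}$ create poles at the zeros of $E$, i.e.\ at the poles of $f$, whose number is not assumed finite, so one must first control $N(r,1/E)$ before the identity is genuinely in Clunie's form; this is precisely where the hypothesis that the coefficients of the monomials attaining $w(M_i)/d(M_i)=1+\alpha$ do not vanish is used, via the same local analysis of the principal part of $P[f]$ at a pole of $f$ as in the rational case. I expect this control of the poles of $f$ (the zeros of $E$) to be the main obstacle; everything else is bookkeeping with Lemma \ref{lem:3} and Clunie's lemma.
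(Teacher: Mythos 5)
Your decomposition $f=p/E$ with $p$ a polynomial carrying the zeros of $f$ and $E$ entire, the reduction of $P[f]\equiv c$ to an identity $c\,E^{W}=\Phi$ in which $\Phi$ is a differential polynomial in $E$ of degree at most $W-1$ with small coefficients, and the conclusion $T(r,E)=S(r,E)$ via a logarithmic-derivative estimate is exactly the route taken in the paper: there one writes $f=g/h$, obtains $P[f]=U[h]/h^{w(P)}$ with $d(U)\le w(P)-1$, applies Doeringer's estimate (Lemma \ref{lem:3}) in place of Clunie, and settles the remaining polynomial case by comparing degrees in $h^{w(P)}=U[h]/c$. Your bookkeeping $R_j=E^{j}\rho_j$, $S_i=E^{\,w(M_i)-d(M_i)}\sigma_i$ is correct. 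However, three points in your write-up are wrong or unsupported.

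First, $E=p/f$ is \emph{not} zero-free: it vanishes precisely at the poles of $f$ (you contradict this yourself later). Second, and most seriously, the step you single out as the main obstacle is both spurious and ``resolved'' by a hypothesis you do not have. Since $\sigma_i$ is a polynomial of degree at most $w(M_i)-d(M_i)$ in the logarithmic derivatives $E^{(j)}/E$ and $W\ge w(M_i)$, expanding each term $a_i\sigma_iE^{\,W-d(M_i)}$ exhibits $\Phi$ as a genuine differential polynomial in $E$ with nonnegative powers of $E$ and with coefficients that are polynomials in $z$ times the $a_i$; Clunie's lemma (equally Lemma \ref{lem:3}) requires only $m(r,\cdot)=S(r,E)$ of the coefficients and never sees $N(r,1/E)$, and since $E$ is entire, $T(r,E)=m(r,E)$ closes the argument with no control of the poles of $f$ at all. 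The non-vanishing of the coefficients of the monomials attaining $w(M_i)/d(M_i)=1+\alpha$ is a standing assumption for the main theorems, \emph{not} a hypothesis of Lemma \ref{lem:4}, so it cannot be invoked here; as written, the step you yourself call the crux rests on an unavailable assumption. Third, your disposal of the rational case by matching principal parts is not a proof: leading terms of distinct monomials of equal pole order can cancel (for instance $3(f'')^{2}-2f'f'''$ has vanishing leading term at a simple pole), so $P[f]$ need not visibly inherit a pole of $f$ from the top-order terms. In the non-transcendental case the $m$-estimate only gives $T(r,E)=O(\log r)$, i.e.\ that $E$ is a polynomial, and one must then finish by a degree comparison in $c\,E^{W}=\Phi$ (as the paper does) or by a local analysis carried out to all orders, neither of which your sketch supplies.
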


\begin{proof}
Since $f$ has finitely many zeros, write $f=g/h,$ for some polynomial $g$ and some entire function $h.$ Then we have 
$$f^{(k)}=\frac{Q_k[h]}{h^{k+1}},$$ where $Q_k[h]$ is a homogeneous differential polynomial of degree $k$ with polynomial coefficients and $$M_j[f]=\frac{H_j[h]}{h^{w(M_j)}}$$ for $j=1,2,\ldots, k,$ where $H_j[h]$ are homogeneous differential polynomials such that $d(H_j)=w(M_j)-d(M_j).$ This gives $$P[f]=\frac{1}{h^{w(P)}}\cdot\left(\sum\limits_{j=1}^{k}a_j h^{w(P)-w(M_j)}H_j[h]\right)=\frac{U[h]}{h^{w(P)}},$$ where $U[h]$ is a differential polynomial of $h$ whose coefficients are the product of polynomials with small functions of $f.$ Moreover $$d(U)\leq\max\limits_{1\leq j\leq k}\left\{w(P)-d(M_j)\right\}\leq w(P)-1.$$ Now suppose that $P[f]\equiv c$ for some non-zero constant $c.$ Then we can write 
\begin{equation}\label{eqn:3}
h^{w(P)}=\frac{1}{c}\cdot U[h].
\end{equation}

From Lemma \ref{lem:3}, we deduce that $$w(P)\cdot m(r,h)=m(r, U[h])+ \mathcal{O}(1)\leq (w(P)-1)\cdot m(r,h) + \mathcal{O}(\log r) + S(r, f),$$ showing that $T(r,h)=m(r,h)=S(r,f)=S(r,h).$ Therefore, $h$ must be a polynomial and so is $h^{w(P)}.$ But degree of $h^{w(P)}$ is $w(P)\cdot d(h)$ and $U[h]$ is a polynomial of degree at most $\left(w(P)-1\right)\cdot d(h).$ This together with \eqref{eqn:3} yield $d(h)\leq 0.$ Thus $h$ is constant and so $f$ is a polynomial.
\end{proof}

The proofs of Theorem \ref{thm:1} and Theorem \ref{thm:2} also require the following result due to Grahl \cite[Lemma 13]{grahl}.
 
\begin{lemma}\label{lem:5}
Let $f\in\mathcal{H}(\mathbb{C})$ be a non-constant function and let $P$ be a differential polynomial such that $P[f]\equiv 0.$ Assume that $T(r,a)=S(r,f)$ for each coefficient $a$ of $P[f].$ If $$P= Q_0 + \ldots + Q_d$$ with homogeneous differential polynomials $Q_j$ of degree $j$ or ($Q_j\equiv 0$) and $Q_l[f]\not\equiv 0$ for some $l$ in the set $\left\{0,\ldots, d-1\right\},$ then $$m\left(r,\frac{1}{f}\right)\leq l\cdot N\left(r,\frac{1}{f}\right) + S(r,f).$$
\end{lemma}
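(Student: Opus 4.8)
The plan is to prove Lemma \ref{lem:5} by extracting an algebraic relation from the identity $P[f]\equiv 0$ and then estimating the proximity function $m(r,1/f)$ by splitting $P$ according to its homogeneous components. First I would write $P=Q_0+Q_1+\cdots+Q_d$, where each $Q_j$ is the sum of all monomials of $P$ of degree exactly $j$ (a homogeneous differential polynomial of degree $j$), with the convention that $Q_j\equiv 0$ if $P$ has no monomials of that degree. The key structural observation is that for a homogeneous differential polynomial $Q_j$ of degree $j$, the quotient $Q_j[f]/f^{\,j}$ is a sum of products of logarithmic-derivative-type terms $f^{(i)}/f$ times coefficients; consequently, by the Lemma on the Logarithmic Derivative (the version recorded as Lemma \ref{lem:3}) together with the hypothesis $T(r,a)=S(r,f)$ on all coefficients, one obtains $m\bigl(r,Q_j[f]/f^{\,j}\bigr)=S(r,f)$ for every $j$.

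Next I would exploit $P[f]\equiv 0$ to isolate the lowest-degree term that actually contributes. Let $l$ be as in the statement, so $Q_l[f]\not\equiv 0$ with $l\le d-1$; after dividing the identity $Q_0[f]+\cdots+Q_d[f]\equiv 0$ by the appropriate power of $f$, I would produce a representation of the form
\begin{equation*}
\frac{Q_l[f]}{f^{\,l}}=-\sum_{j\ne l}\frac{Q_j[f]}{f^{\,j}}\cdot f^{\,l-j}.
\end{equation*}
The strategy is then to bound $m(r,1/f)$ by relating it to $m\bigl(r,Q_l[f]/f^{\,l}\bigr)$ and to the terms on the right. The crucial point is that $1/f$ can be controlled by $Q_l[f]/f^{\,l+1}$ (or a suitable such quotient) up to a factor that is itself of small proximity, while the poles of $1/f$ come only from the zeros of $f$. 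Writing $1/f=(Q_l[f]/f^{\,l})\cdot(1/Q_l[f])\cdot f^{\,l-1}$ and carefully accounting for the orders, the zeros of $f$ of order contributing to $N(r,1/f)$ enter with multiplicity factor $l$, which is the origin of the coefficient $l$ in the desired estimate.

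The main obstacle, and the step that demands genuine care, is the bookkeeping that turns the counting function $N(r,1/f)$ into the precise bound $l\cdot N(r,1/f)$ rather than some larger multiple. At a zero $z_0$ of $f$ of multiplicity $\mu$, each factor $f^{(i)}/f$ in a degree-$j$ monomial of $Q_j$ has a pole whose order is controlled by the order of the highest derivative appearing; one must verify that after forming $Q_l[f]/f^{\,l+1}$ the pole order at $z_0$ is bounded by $l\mu$ plus a bounded contribution, so that the associated counting function is at most $l\,N(r,1/f)+S(r,f)$. I would handle this by a local Laurent-expansion analysis at the zeros of $f$, combined with the global proximity estimates $m\bigl(r,Q_j[f]/f^{\,j}\bigr)=S(r,f)$ already established, and conclude via the First Main Theorem that $m(r,1/f)\le l\,N(r,1/f)+S(r,f)$. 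The hypothesis that $f$ is entire (so that $1/f$ has poles only at the zeros of $f$, with no contribution from poles of $f$) is exactly what makes this clean.
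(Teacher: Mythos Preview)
The paper does not supply a proof of this lemma; it is quoted from Grahl \cite{grahl} (his Lemma~13) and invoked as a black box in the proof of Theorem~\ref{thm:1}. So there is no in-paper argument against which to compare, and I assess your sketch on its own.

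Your sketch assembles the right ingredients---the logarithmic-derivative estimate $m\bigl(r,Q_j[f]/f^{\,j}\bigr)=S(r,f)$, the identity coming from $P[f]\equiv 0$, and the First Main Theorem---but leaves two genuine gaps at the decisive step. First, you must reduce to the \emph{smallest} index $l$ with $Q_l[f]\not\equiv 0$; otherwise, once you divide through by $f^{\,l+1}$, the terms with $j<l$ carry negative powers $f^{\,j-l-1}$ that are unbounded on $\{|f|\le 1\}$, and the restriction argument collapses. (The reduction is free, since the asserted bound is monotone in $l$.) Second, your Laurent bookkeeping targets the wrong quotient. The pole of $Q_l[f]/f^{\,l+1}$ at a $\mu$-fold zero of $f$ can be as large as $(l+1)\mu$ (take $Q_l=(f^{(k)})^l$ with $k\ge\mu$), so the claimed bound ``$l\mu$ plus a bounded contribution'' is false in general; and even where it holds, that bounded contribution produces an extra $C\,\overline N(r,1/f)$, not $S(r,f)$. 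The clean route is to work with $Q_l[f]/f^{\,l}$ instead: since $f$ is entire, $Q_l[f]$ is holomorphic away from coefficient poles, while $f^{\,l}$ vanishes to order exactly $l\mu$, so the pole order of $Q_l[f]/f^{\,l}$ at that point is at most $l\mu$ and hence $N\bigl(r,Q_l[f]/f^{\,l}\bigr)\le l\,N(r,1/f)+S(r,f)$ with no residual $\overline N$-term. Combined with $m\bigl(r,Q_l[f]/f^{\,l}\bigr)=S(r,f)$ this gives $m\bigl(r,f^{\,l}/Q_l[f]\bigr)\le T\bigl(r,Q_l[f]/f^{\,l}\bigr)\le l\,N(r,1/f)+S(r,f)$. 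With $l$ minimal the identity $1/f=-\bigl(f^{\,l}/Q_l[f]\bigr)\sum_{j>l}f^{\,j-l-1}\,(Q_j[f]/f^{\,j})$ then shows, on $\{|f|\le 1\}$, that $\log^+|1/f|\le\log^+\bigl|f^{\,l}/Q_l[f]\bigr|+\sum_{j>l}\log^+\bigl|Q_j[f]/f^{\,j}\bigr|+O(1)$, and integrating gives the lemma.
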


The following value distribution result plays an important role in the proof of Theorem \ref{thm:5}:

\begin{lemma}\label{lem:7}
Let $f\in\mathcal{M}(\mathbb{C}),$ $f\not\equiv$ constant, $l, q\in\mathbb{N}$ and let $S=\left\{a_1, a_2,\ldots,a_q\right\}$ be any finite set in $\mathbb{C}$ such that $$f(z)\in S\Rightarrow f^{(l)}(z)=0.$$ Then $$q~T(r, f)< (l+1)\overline{N}(r,f)+ \overline{N}\left(r,\frac{1}{f}\right)+ N\left(r,\frac{1}{f}\right) + S(r,f).$$ 
\end{lemma}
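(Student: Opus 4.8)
The plan is to run a Nevanlinna‑theoretic estimate, using $f^{(l)}$ as the auxiliary function. First I would restate the hypothesis: if $f(z_0)=a_j$ with multiplicity $\mu$, then $f^{(l)}(z_0)=0$ forces $\mu\ge l+1$, and $f^{(l)}$ then has a zero of order exactly $\mu-l$ there; in particular every point of $\bigcup_{j}f^{-1}(a_j)$ lies in the zero set of $f^{(l)}$, and at such a point the $N$‑count of $\prod_{j}(f-a_j)$ exceeds that of $f^{(l)}$ by exactly $l$. Next, by the First Main Theorem applied to the distinct finite values $a_1,\dots,a_q$ (together with the standard pointwise‑dominance lemma, valid since the $a_j$ are distinct),
\[
q\,T(r,f)=\sum_{j=1}^{q}\Big(m\big(r,\tfrac{1}{f-a_j}\big)+N\big(r,\tfrac{1}{f-a_j}\big)\Big)+O(1).
\]

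For the proximity sum I would exploit that $f^{(l)}=(f-a_j)^{(l)}$, so that
\[
\sum_{j=1}^{q}\frac{1}{f-a_j}=\frac{1}{f^{(l)}}\sum_{j=1}^{q}\frac{(f-a_j)^{(l)}}{f-a_j},
\]
where each summand on the right is an iterated logarithmic derivative and hence has proximity function $S(r,f)$; thus $\sum_{j}m(r,1/(f-a_j))\le m(r,1/f^{(l)})+S(r,f)$. When $0\notin S$ I would instead use the larger auxiliary function $\tfrac1f+\sum_j\tfrac{1}{f-a_j}=\tfrac1{f^{(l)}}\big(\tfrac{f^{(l)}}{f}+\sum_j\tfrac{(f-a_j)^{(l)}}{f-a_j}\big)$, which is what introduces the $1/f$‑terms into the final bound. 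For the counting sum, the multiplicity reformulation above yields $\sum_{j}N(r,1/(f-a_j))=N\big(r,1/\prod_{j}(f-a_j)\big)\le N(r,1/f^{(l)})+l\sum_{j}\overline N(r,1/(f-a_j))$.

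Substituting into the First Main Theorem identity and then using $m(r,1/f^{(l)})+N(r,1/f^{(l)})=T(r,f^{(l)})+O(1)$, the elementary bounds $m(r,f^{(l)})\le m(r,f)+S(r,f)$ and $N(r,f^{(l)})=N(r,f)+l\,\overline N(r,f)$, and (in the $0\notin S$ version) $m(r,1/f)=T(r,f)-N(r,1/f)+O(1)$, I expect to arrive at an inequality of the shape
\[
q\,T(r,f)\le l\,\overline N(r,f)+N\big(r,\tfrac{1}{f}\big)+l\sum_{j=1}^{q}\overline N\big(r,\tfrac{1}{f-a_j}\big)+S(r,f).
\]

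The main obstacle is the last step: absorbing the residual $l\sum_{j}\overline N(r,1/(f-a_j))$ into $(l+1)\overline N(r,f)+\overline N(r,1/f)$ so as to land precisely on the stated right‑hand side. My intention would be to use that $\bigcup_{j}f^{-1}(a_j)$ sits inside the zero set of $f^{(l)}$ (so $\sum_{j}\overline N(r,1/(f-a_j))\le\overline N(r,1/f^{(l)})$), combined with a Second Main Theorem estimate for $f$ with respect to $S\cup\{0,\infty\}$ to bound this quantity by Nevanlinna functions of $f$ alone, and then to track the constants carefully. I also expect the case $0\in S$ to require separate handling, since the partial fractions of $1/\big(f\prod_{j}(f-a_j)\big)$ then contain a $1/f^{2}$‑term and the estimate of the corresponding proximity function picks up an $m(r,1/f)$ contribution on the right; getting this bookkeeping to produce exactly the claimed constants is where I anticipate the real work lies.
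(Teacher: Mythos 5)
Your proposal does not close, and you say so yourself: it terminates at
\[
q\,T(r,f)\le l\,\overline{N}(r,f)+N\left(r,\tfrac{1}{f}\right)+l\sum_{j=1}^{q}\overline{N}\left(r,\tfrac{1}{f-a_j}\right)+S(r,f),
\]
with the residual term $l\sum_{j}\overline{N}\left(r,\tfrac{1}{f-a_j}\right)$ unabsorbed. This is a genuine gap, not mere bookkeeping. The only absorption available from your own hypotheses (each $a_j$-point has multiplicity at least $l+1$, hence $\overline{N}\left(r,\tfrac{1}{f-a_j}\right)\le\tfrac{1}{l+1}T(r,f)+O(1)$) yields $q\,T(r,f)\le l(l+1)\overline{N}(r,f)+(l+1)N\left(r,\tfrac1f\right)+S(r,f)$, which is strictly weaker than the stated bound and, for $l\ge 3$, too weak for the application in Theorem 5 (there one needs $3T(r,h)<\left(1+\tfrac{1}{l+1}\right)T(r,h)+S(r,h)$, and the weakened coefficients destroy the contradiction). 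Your alternative suggestion --- bounding the residual via ``a Second Main Theorem estimate for $f$ with respect to $S\cup\{0,\infty\}$'' --- is essentially circular: if you are willing to invoke the Second Main Theorem for those $q+2$ values, you should do so at the outset rather than after re-deriving a Milloux-type inequality from the First Main Theorem and logarithmic derivatives.

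That is exactly what the paper does, and the argument is then three lines. The Second Main Theorem for $\infty,0,a_1,\dots,a_q$ gives $q\,T(r,f)\le\overline{N}(r,f)+\overline{N}\left(r,\tfrac1f\right)+\sum_{i}\overline{N}\left(r,\tfrac{1}{f-a_i}\right)+S(r,f)$; the sharing hypothesis gives $\sum_{i}\overline{N}\left(r,\tfrac{1}{f-a_i}\right)\le\overline{N}\left(r,\tfrac{1}{f^{(l)}}\right)\le N\left(r,\tfrac{1}{f^{(l)}}\right)$; and the ingredient you are missing is Yi's estimate (Lemma 6 of the paper), $N\left(r,\tfrac{1}{f^{(l)}}\right)<N\left(r,\tfrac{1}{f}\right)+l\,\overline{N}(r,f)+S(r,f)$, which converts the zero-counting function of $f^{(l)}$ into counting functions of $f$ with precisely the coefficients $(l+1)$ and $1$ appearing in the statement. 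Without this lemma (or an equivalent), your route has no mechanism for producing the claimed right-hand side. Your preliminary multiplicity analysis (an $a_j$-point of multiplicity $\mu$ forces $\mu\ge l+1$ and gives $f^{(l)}$ a zero of order $\mu-l$) is correct but is not needed for the paper's argument.
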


We shall prove Lemma \ref{lem:7} by using the following value distribution result due to Yi \cite[Lemma 3]{yi}:

\begin{lemma}\label{lem:6}
Let $f$ be a non-constant meromorphic function in $\mathbb{C}$ and let $l$ be a non-negative integer. Then $$N\left(r,\frac{1}{f^{(l)}}\right)< N\left(r,\frac{1}{f}\right)+l\bar{N}\left(r, f\right)+S\left(r, f\right).$$
	
\end{lemma}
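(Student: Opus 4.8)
The plan is to obtain the bound from the lemma on the logarithmic derivative together with the First Main Theorem, by comparing $f$ with its derivative $f^{(l)}$ through the ratio $f^{(l)}/f$. The case $l=0$ is trivial, so I would assume $l\geq 1$ throughout.

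First I would record the two elementary ingredients controlling $f^{(l)}$. A pole of $f$ of multiplicity $\mu$ at a point becomes a pole of $f^{(l)}$ of multiplicity $\mu+l$, so the contribution to the pole-counting function increases by exactly $l$ per distinct pole; this yields the identity $N(r,f^{(l)})=N(r,f)+l\,\overline{N}(r,f)$. Next, by the iterated lemma on the logarithmic derivative — the extension of \cite[Lemma 2.3]{hayman} to higher-order derivatives — one has $m\!\left(r,\frac{f^{(l)}}{f}\right)=S(r,f)$, and in particular $m(r,f^{(l)})\leq m(r,f)+S(r,f)$. Combining these two facts gives
$$T(r,f^{(l)})=m(r,f^{(l)})+N(r,f^{(l)})\leq T(r,f)+l\,\overline{N}(r,f)+S(r,f).$$

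The core of the argument is then a comparison of proximity functions at the zeros. From the factorization $\frac{1}{f}=\frac{f^{(l)}}{f}\cdot\frac{1}{f^{(l)}}$ and subadditivity of the proximity function I obtain
$$m\!\left(r,\frac{1}{f}\right)\leq m\!\left(r,\frac{1}{f^{(l)}}\right)+m\!\left(r,\frac{f^{(l)}}{f}\right)=m\!\left(r,\frac{1}{f^{(l)}}\right)+S(r,f).$$
Applying the First Main Theorem to each side, namely $m(r,1/g)=T(r,g)-N(r,1/g)+O(1)$ with $g=f$ and $g=f^{(l)}$, this rearranges to
$$N\!\left(r,\frac{1}{f^{(l)}}\right)\leq T(r,f^{(l)})-T(r,f)+N\!\left(r,\frac{1}{f}\right)+S(r,f).$$

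Finally I would substitute the bound $T(r,f^{(l)})-T(r,f)\leq l\,\overline{N}(r,f)+S(r,f)$ from the second step, which immediately gives $N(r,1/f^{(l)})\leq N(r,1/f)+l\,\overline{N}(r,f)+S(r,f)$, the asserted inequality (the strict sign being absorbed into the $S(r,f)$ term). The only point requiring genuine care — and the main obstacle — is the justification that $m(r,f^{(l)}/f)=S(r,f)$ rather than merely $S(r,f^{(l)})$; this I would handle by writing $f^{(l)}/f$ as the telescoping product $\prod_{j=1}^{l}f^{(j)}/f^{(j-1)}$, estimating each factor by the classical logarithmic derivative lemma, and invoking the fact that $S(r,f^{(j)})=S(r,f)$ for each $j$ to collapse all the error terms back to $S(r,f)$.
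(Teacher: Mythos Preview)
The paper does not prove this lemma; it attributes the result to Yi \cite[Lemma 3]{yi} and simply quotes it. Your argument is correct and is in fact the standard derivation of this inequality: the pole-counting identity $N(r,f^{(l)})=N(r,f)+l\,\overline{N}(r,f)$, the iterated logarithmic-derivative estimate $m(r,f^{(l)}/f)=S(r,f)$, and the First Main Theorem combine exactly as you describe to give the bound, and your remark that the strict sign is absorbed into the $S(r,f)$ term is the usual convention.
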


\begin{proof}[\bf Proof of Lemma \ref{lem:7}]
By the Second Fundamental Theorem of Nevanlinna, we have $$q~T(r,f)\leq\overline{N}(r,f)+\overline{N}\left(r, \frac{1}{f}\right)+\sum\limits_{i=1}^{q}\overline{N}\left(r,\frac{1}{f-a_i}\right) + S(r,f).$$
Since $$f(z)\in S\Rightarrow f^{(l)}(z)=0,$$ we find that $$\sum\limits_{i=1}^{q}\overline{N}\left(r,\frac{1}{f-a_i}\right)\leq\overline{N}\left(r,\frac{1}{f^{(l)}}\right)$$ and hence we obtain
\begin{eqnarray*}
q~T(r,f)&\leq&\overline{N}(r,f)+\overline{N}\left(r,\frac{1}{f}\right)+\sum\limits_{i=1}^{q}\overline{N}\left(r,\frac{1}{f-a_i}\right)+S(r,f)\\
&\leq& \overline{N}(r,f)+\overline{N}\left(r,\frac{1}{f}\right)+ \overline{N}\left(r,\frac{1}{f^{(l)}}\right)+S(r,f)\\
&\leq& \overline{N}(r,f)+ \overline{N}\left(r,\frac{1}{f}\right)+ N\left(r,\frac{1}{f^{(l)}}\right) +S(r,f)
\end{eqnarray*}
By Lemmaa \ref{lem:6}, we have $$N\left(r,\frac{1}{f^{(l)}}\right)< N\left(r,\frac{1}{f}\right)+l\bar{N}\left(r, f\right)+S\left(r, f\right)$$ and therefore
\begin{eqnarray*}
q~T(r,f)&<& \overline{N}(r,f)+\overline{N}\left(r,\frac{1}{f}\right)+ N\left(r,\frac{1}{f}\right)+l\bar{N}\left(r, f\right)+ S(r,f)\\
&\leq& (l+1)\overline{N}(r,f)+ \overline{N}\left(r,\frac{1}{f}\right)+ N\left(r,\frac{1}{f}\right) + S(r,f).
\end{eqnarray*}
\end{proof}

\section{Proofs of Main Results}

\begin{proof}[\bf Proof of Theorem \ref{thm:1}]
Suppose that $\mathcal{F}$ is not normal at $z_0\in D.$ Then by Lemma \ref{lem:2}, there exist sequences $\left\{f_n\right\}\subset\mathcal{F},$ $\left\{z_n\right\}\subset D$ with $z_n\longrightarrow z_0$ and positive numbers $\rho_n$ satisfying $\rho_n\longrightarrow 0$ such that $$g_n(\zeta):= f_n(z_n+\rho_n\zeta)\longrightarrow g(\zeta)$$ locally uniformly in $\mathbb{C}$ with respect to the spherical metric, where $g\in\mathcal{M}(\mathbb{C})$ is non-constant. By Picard's Theorem, it follows that $g$ assumes at least one of the values of $S_1.$ We claim that for any $a\in S_1,$ all zeros of $g-a$ have multiplicity at least $\alpha+1.$ Indeed, let $g(\zeta_0)=a.$ Since $g\not\equiv a,$ by Hurwitz's Theorem, there exists $\zeta_n\longrightarrow \zeta_0$ such that for sufficiently large $n,$ 
$$g_n(\zeta_n)=f_n(z_n+\rho_n\zeta_n)=a\in S_1.$$ By hypothesis, we have $f_n^{(i)}(z_n+\rho_n\zeta_n)=0$ for $1\leq i\leq\alpha.$ 

Thus $$g^{(i)}(\zeta_0)=\lim\limits_{n\to\infty}g_n^{(i)}(\zeta_n)=\lim\limits_{n\to\infty} \rho_n^{i}f_n^{(i)}(z_n+\rho_n\zeta_n)=0$$ for $1\leq i\leq\alpha,$ and this establishes the claim.

Now suppose that $\zeta_0$ is a zero of $g-a$ with multiplicity $l.$ Then there exist some $\delta>0$ such that for sufficiently large $n,$ $g_n$ is holomorphic in the disk $D(\zeta_0, \delta).$ Let $$ h_n(\zeta):=\frac{g_n(\zeta)-a}{\rho_n^\alpha}.$$ Then $h_n$ is holomorphic in $D(\zeta_0, \delta)$ and $h_n(\zeta')=0$ if and only if $g_n(\zeta')=a$ and so $h_n$ has zeros of multiplicity at least $\alpha+1.$

 Next, we claim that $\left\{h_n\right\}$ is not normal at $\zeta_0.$ Suppose otherwise. Then there exist a $\delta_1$ such that $0<\delta_1<\delta$ and a subsequence of $\left\{h_n\right\}$ (again denoted by $\left\{h_n\right\}$) such that $h_n\longrightarrow h$ locally uniformly in $D(\zeta_0, \delta_1),$ where $h$ is either holomorphic or identically $\infty$ in $D(\zeta_0, \delta_1).$ Since $g(\zeta_0)=a$ and $g\not\equiv a,$ by Hurwitz's Theorem, we find that
$h(\zeta_0)=0.$ Also, since zeros of $g-a$ in $D(\zeta_0, \delta_1)$ are isolated, there is some $\zeta_1\neq\zeta_0$ in $D(\zeta_0,\delta_1)$ such that $g(\zeta_1)\neq a.$ Thus for sufficiently large $n,$ $\lvert g_n(\zeta_1)-a\rvert>0$ and hence $$\lvert h_n(\zeta_1)\rvert=\frac{\lvert g_n(\zeta_1)-a\rvert}{\rho_n^{\alpha}}\longrightarrow\infty.$$ This implies that $\left\{h_n\right\}$ converges uniformly to $\infty$ in $D(\zeta_0,\delta_1)$ which is not the case. 

Again, by Lemma \ref{lem:2}, there exists a subsequence of $\left\{h_n\right\}$ (again denoted by $\left\{h_n\right\}$), a sequence of points $\hat{\zeta_n}\longrightarrow\zeta_0$ and positive numbers $r_n\longrightarrow 0$ such that $$\phi_n(\xi)=\frac{h_n(\hat{\zeta_n}+r_n\xi)}{r_n^\alpha}=\frac{g_n(\hat{\zeta_n}+r_n\xi)-a}{(\rho_n r_n)^\alpha}=\frac{f_n(z_n+\rho_n\hat{\zeta_n}+\rho_n r_n\xi)-a}{(\rho_n r_n)^\alpha}$$ converges locally uniformly to a non-constant entire function $\phi(\xi).$ Since $g-a$ has zeros of multiplicity at least $\alpha+1,$ it easily follows that zeros of $\phi$ have multiplicity at least $\alpha+1.$

\smallskip

{\it Claim 1}: $\phi$ has finitely many zeros.\\
It is sufficient to show that $\phi$ has at most $l$ distinct zeros. Suppose on the contrary that $\phi$ has $l+1$ distinct zeros, say $\xi_1,\xi_2,\ldots, \xi_{l+1}.$ Then by Hurwitz's Theorem, there exist $l+1$ distinct sequences $\left\{\xi_{n_j}\right\}$ such that $\xi_{n_j}\longrightarrow\xi_j$ and $\phi_n\left(\xi_{n_j}\right)=0$ for $j=1,2,\ldots, l+1.$ This implies that $$g_n(\hat{\zeta_n}+r_n\xi_{n_j})=a,~j=1,2,\ldots,l+1.$$ Since $\hat{\zeta_n}+r_n\xi_{n_j}\longrightarrow\zeta_0$ and $\hat{\zeta_n}+r_n\xi_{n_i}\neq \hat{\zeta_n}+r_n\xi_{n_j}$ for $i\neq j,$ it follows that $\zeta_0$ is a zero of $g-a$ with multiplicity at least $l+1$ which contradicts the fact that $\zeta_0$ is a zero of $g-a$ with multiplicity $l.$ This proves the claim.

\smallskip

{\it Claim 2}: If $P[\phi]$ assumes a non-zero finite value, say $\beta,$ then $S_\beta=\left\{f_n(z): P[f_n](z)=\beta, z\in D\right\}$ is an infinite set. \\
First note that 
\begin{eqnarray*}
\tilde{P}[\phi_n](\xi) &:=& P[f_n](z_n+\rho_n\hat{\zeta_n}+\rho_n r_n\xi)\\
&=& \sum\limits_{i=1}^{m}a_i(z_n+\rho_n\hat{\zeta_n}+\rho_n r_n\xi)~\rho_n^{\left[(1+\alpha)d(M_i)-w(M_i)\right]}~M_i[\phi_n](\xi).
\end{eqnarray*}
Since $$1+\alpha=\frac{w(M_1)}{d(M_1)}~\mbox{ and }~ \frac{w(M_1)}{d(M_1)}\geq\frac{w(M_t)}{d(M_t)}~ \mbox{ for }~ 2\leq t\leq m,$$ we assume, without loss of generality, that $$1+\alpha=\frac{w(M_1)}{d(M_1)}=\frac{w(M_2)}{d(M_2)}=\cdots=\frac{w(M_s)}{d(M_s)}$$ and $$\frac{w(M_1)}{d(M_1)}>\frac{w(M_t)}{d(M_t)}~ \mbox{for}~ s+1\leq t\leq m.$$
Therefore, we obtain
\begin{eqnarray*}
\tilde{P}[\phi_n](\xi) &=& P[f_n](z_n+\rho_n\hat{\zeta_n}+\rho_n r_n\xi)\\
&=& \sum\limits_{i=1}^{s}a_i(z_n+\rho_n\hat{\zeta_n}+\rho_n r_n\xi)M_i[\phi_n](\xi)\\ &\qquad& + \sum\limits_{i=s+1}^{m}a_i(z_n+\rho_n\hat{\zeta_n}+\rho_n r_n\xi)~\rho_n^{\left[(1+\alpha)d(M_i)-w(M_i)\right]}~M_i[\phi_n](\xi)\\
&=& \tilde{Q}[\phi_n](\xi) + \sum\limits_{i=s+1}^{m}a_i(z_n+\rho_n\hat{\zeta_n}+\rho_n r_n\xi)~\rho_n^{\left[(1+\alpha)d(M_i)-w(M_i)\right]}M_i[\phi_n](\xi),
\end{eqnarray*}
where $$\tilde{Q}[\phi_n](\xi):= \sum\limits_{i=1}^{s}a_i(z_n+\rho_n\hat{\zeta_n}+\rho_n r_n\xi)M_i[\phi_n](\xi).$$
Again, since all $a_i~(1\leq i\leq m)$ are holomorphic functions in $D$ and $a_i(z)\neq 0$ for $1\leq i\leq s,$ it follows that $$\sum\limits_{i=s+1}^{m}a_i(z_n+\rho_n\hat{\zeta_n}+\rho_n r_n\xi)~\rho_n^{\left[(1+\alpha)d(M_i)-w(M_i)\right]}~M_i[\phi_n](\xi)$$ converges uniformly to $0$ on compact subsets of $\mathbb{C}$ and hence $$\tilde{P}[\phi_n](\xi)=P[f_n](z_n+\rho_n\hat{\zeta_n}+\rho_n r_n\xi)\longrightarrow Q[\phi](\xi)$$ uniformly on compact subsets of $\mathbb{C},$ where $$Q[\phi](\xi)=\sum\limits_{i=1}^{s}a_i(z_0)M_i[\phi](\xi).$$

We claim that $Q[\phi]\not\equiv$ constant. If $Q[\phi]\equiv$ constant, then by Lemma \ref{lem:4}, we find that either $\phi$ is polynomial or $Q[\phi]\equiv 0.$ If $\phi$ is a polynomial, then by Weierstrass's Theorem, we have $$\phi_n^{(\alpha)}(\xi)=f_n^{(\alpha)}(z_n+\rho_n\hat{\zeta_n}+\rho_n r_n\xi)\longrightarrow\phi^{(\alpha)}(\xi)$$ uniformly on compact subsets of $\mathbb{C}.$ Since $\phi$ is a non-constant polynomial having zeros of multiplicity at least $\alpha+1,$ it follows that $\phi^{(\alpha)}$ assumes every value in $\mathbb{C}.$ In particular, $\phi^{(\alpha)}$ assumes a non-zero value $b\in S_1.$ Let $\phi^{(\alpha)}(\xi^*)=b.$ Since $\phi^{(\alpha)}(\xi)\not\equiv b,$ by Hurwitz's Theorem, there exists $\xi_n^*\longrightarrow\xi^*$ such that for sufficiently large $n,$ $$\phi_n^{(\alpha)}(\xi_n^*)=f_n^{(\alpha)}(z_n+\rho_n\hat{\zeta_n}+\rho_n r_n\xi_n^*)=b~(\neq 0).$$ This implies that $f_n(z_n+\rho_n\hat{\zeta_n}+\rho_n r_n\xi_n^*)\neq a,$ since if $f_n(z_n+\rho_n\hat{\zeta_n}+\rho_n r_n\xi_n^*)=a,$ then by hypothesis, we have $f_n^{(\alpha)}(z_n+\rho_n\hat{\zeta_n}+\rho_n r_n\xi_n^*)=0,$ a contradiction to the fact that $f_n^{(\alpha)}(z_n+\rho_n\hat{\zeta_n}+\rho_n r_n\xi_n^*)=b\neq 0.$

Then 
$$\phi(\xi^*)=\lim\limits_{n\to\infty}\phi_n(\xi_n^*)=\lim\limits_{n\to\infty}\frac{f_n(z_n+\rho_n\hat{\zeta_n}+\rho_n r_n\xi_n^*)-a}{(\rho_n r_n)^\alpha}=\infty,$$
showing that $\xi^*$ is a pole of $\phi$ which is not possible. Thus $\phi$ cannot be a polynomial. 

Next, if $Q[\phi]\equiv 0,$ then by Lemma \ref{lem:5}, we deduce that $$m(r,\frac{1}{\phi})=S(r,\phi)$$ and hence $$m(r,\phi)=S(r,\phi)$$ showing that $\phi$ is a polynomial, which is not the case. Thus $Q[\phi]\not\equiv 0$ and this proves our claim. Then by Picard's Theorem, $Q[\phi](\xi)=\beta$ has at least one solution for any $\beta\in\mathbb{C}$ with at most one exception.

Now, let $\xi_0\in\mathbb{C}$ be such that $Q[\phi](\xi_0)=\beta_1~(\neq 0)$ and $\phi(\xi_0)=\beta_2,$ where $\beta_1,\beta_2\in\mathbb{C}.$ It is clear that $\beta_2\neq 0,$ otherwise $Q[\phi](\xi_0)=0$ since zeros of $\phi$ have multiplicity at least $\alpha+1.$ 

Since $Q[\phi]\not\equiv\beta_1,$ by Hurwitz's Theorem, there exists $\xi_n\longrightarrow\xi_0$ such that for sufficiently large $n,$ $$P[f_n](z_n+\rho_n\hat{\zeta_n}+\rho_n r_n\xi_n)=\beta_1~(\neq 0).$$ Also, $\phi_n(\xi_n)\longrightarrow\phi(\xi_0)=\beta_2~(\neq 0)$ implies that $f_n(z_n+\rho_n\hat{\zeta_n}+\rho_n r_n\xi_n)\longrightarrow a.$ Further, note that if $f_n(z_n+\rho_n\hat{\zeta_n}+\rho_n r_n\xi_n)= a,$ then by hypothesis, $$f_n^{(i)}(z_n+\rho_n\hat{\zeta_n}+\rho_n r_n\xi_n)=0~ \mbox{for}~ 1\leq i\leq\alpha$$ so that $$P[f_n](z_n+\rho_n\hat{\zeta_n}+\rho_n r_n\xi_n)=0,$$ a contradiction to the fact that $P[f_n](z_n+\rho_n\hat{\zeta_n}+\rho_n r_n\xi_n)=\beta_1\neq 0.$ Therefore, $f_n(z_n+\rho_n\hat{\zeta_n}+\rho_n r_n\xi_n)\neq a$ and hence $$S_{\beta_{1}}=\left\{f_n(z_n+\rho_n\hat{\zeta_n}+\rho_n r_n\xi_n): P[f_n](z_n+\rho_n\hat{\zeta_n}+\rho_n r_n\xi_n)=\beta_1\right\}$$ is an infinite set. This proves Claim $2.$

Now, since $\#(S_1)\geq 3,$ $S_1$ contains a non-zero finite value, say $\nu.$ Then by Claim $2,$ the set $S_\nu$ is an infinite set and cannot be contained in the finite set $S_2,$ a contradiction to our hypothesis. Hence $\mathcal{F}$ is normal in $D.$ 
\end{proof}

\begin{proof}[\bf Proof of Theorem \ref{thm:2}]
Suppose that $f$ is not a normal function. Then by Lemma \ref{lem:1}, there exist points $z_n\in\mathbb{D}$ and positive numbers $\rho_n\longrightarrow 0$ such that $$g_n(\zeta):=f(z_n+\rho_n\zeta)$$ converges spherically locally uniformly in $\mathbb{C}$ to a non-constant meromorphic function $g(\zeta).$ Rest of the proof goes on the lines of the proof of Theorem \ref{thm:1} with simple modifications, hence omitted.
\end{proof}

\begin{proof}[\bf Proof of Theorem \ref{thm:3}]
Suppose that $\mathcal{F}$ is not normal at $z_0\in D.$ Then by Lemma \ref{lem:2}, there exist sequences $\left\{f_n\right\}\subset\mathcal{F},$ $\left\{z_n\right\}\subset D$ with $z_n\longrightarrow z_0$ and positive numbers $\rho_n$ satisfying $\rho_n\longrightarrow 0$ such that $$g_n(\zeta):= f_n(z_n+\rho_n\zeta)\longrightarrow g(\zeta)$$ locally uniformly in $\mathbb{C}$ with respect to the spherical metric, where $g\in\mathcal{M}(\mathbb{C})$ is non-constant.

We claim that $g$ assumes at least one value from $S_1.$ Suppose on the contrary that $g$ omits both $a_1$ and $a_2.$ Then by Picard's Theorem, $g$ assumes every value in $\mathbb{C}\setminus S_1.$ In particular, $g$ must assume the value $a_3.$ Suppose there exists $\zeta_0\in\mathbb{C}$ such that $g(\zeta_0)=a_3.$ Since $g\not\equiv a_3,$ by Hurwitz's Theorem, there exists a sequence $\zeta_n\longrightarrow\zeta_0$ such that for sufficiently large $n,$ $$g_n(\zeta_n)=f_n(z_n+\rho_n\zeta_n)=a_3.$$ By hypothesis, we have $\lvert f_n'(z_n+\rho_n\zeta_n)\rvert\leq M.$ 

Then
\begin{eqnarray*}
\left\lvert g'(\zeta_0)\right\rvert&=&\left\lvert\lim\limits_{n\to\infty}g_n'(\zeta_n)\right\rvert\\
&=& \lim\limits_{n\to\infty}\left\lvert \rho_n f_n'(z_n+\rho_n\zeta_n)\right\rvert\\
&\leq& \lim\limits_{n\to\infty}\rho_n~ M\\
&=& 0.
\end{eqnarray*}
This shows that $\zeta_0$ is a zero of $g-a_3$ of multiplicity at least $2.$ Let $m_1,~ m_2$ and $m_3$ denote the multiplicity of zeros of $g-a_1,~ g-a_2$ and $g-a_3$ respectively. Then by simple calculation, we find that $$\sum\limits_{j=1}^{3}\left(1-\frac{1}{m_j}\right)>2,$$ which is not true by Lemma \ref{neva}. Now the proof is completed by following the proof of Theorem \ref{thm:1}.
\end{proof}

\medskip

 The proof of Theorem \ref{thm:4} follows from the proof of Theorem \ref{thm:3} with minor modifications and hence omitted.

\medskip

\begin{proof}[\bf Proof of Theorem \ref{thm:5}]
$(i)$ Suppose that $f$ is not a normal function. Then the function $g=f-a$ is not normal. So, by Lemma \ref{lem:1}, there exist points $z_n\in\mathbb{D}$ and numbers $\rho_n\longrightarrow 0$ such that $$h_n(\zeta):=g(z_n+\rho_n\zeta)\longrightarrow h(\zeta)$$ spherically locally uniformly in $\mathbb{C},$ where $h\in\mathcal{M}(\mathbb{C})$ is a non-constant function. 

By Argument principle, it follows that both zeros and poles of $h$ have multiplicity at least $l+1.$ Since $a\in S_1,$ we assume, without loss of generality, that $a=a_1.$ Then $$h_n(\zeta)=g(z_n+\rho_n\zeta)=f(z_n+\rho_n\zeta)-a_1\longrightarrow h(\zeta).$$ 

\textit{Case 1.} $h$ does not omit zero.

Let $h(\zeta_0)=0.$ Then we can find some $\delta>0$ such that for sufficiently large $n,$ $h_n$ is holomorphic in $D(\zeta_0,\delta).$

Let $$\psi_n(\zeta)=\frac{h_n(\zeta)}{\rho_n^l}.$$ Then $\psi_n$ is holomorphic in $D(\zeta_0,\delta).$ 
Also, $\psi_n(\zeta')=0$ if and only if $h_n(\zeta')=0$ and so $h_n$ has zeros of multiplicity at least $l+1.$

 Next, we claim that $\left\{\psi_n\right\}$ is not normal at $\zeta_0.$ Suppose on the contrary that $\left\{\psi_n\right\}$ is normal. Then there exist a $\delta_1$ such that $0<\delta_1<\delta$ and a subsequence of $\left\{\psi_n\right\}$ (again denoted by $\left\{\psi_n\right\}$) such that $\psi_n\longrightarrow \psi$ locally uniformly in $D(\zeta_0, \delta_1),$ where $\psi$ is either holomorphic or identically $\infty$ in $D(\zeta_0, \delta_1).$ 

Since $h(\zeta_0)=0$ and $h\not\equiv 0,$ by Hurwitz's Theorem, there exits $\zeta_n\longrightarrow\zeta_0$ such that for sufficiently large $n,$ $h_n(\zeta_n)=0$ and hence 
$$\psi(\zeta_0)=\lim\limits_{n\to\infty}\psi_n(\zeta_n)=\lim\limits_{n\to\infty}\frac{h_n(\zeta_n)}{\rho_n^l}=0.$$
Also, since zeros of $h$ in $D(\zeta_0, \delta_1)$ are isolated, there is some $\zeta_1\neq\zeta_0$ in $D(\zeta_0,\delta_1)$ such that $h(\zeta_1)\neq 0.$ Thus for sufficiently large $n,$ $\lvert h_n(\zeta_1)\rvert>0$ and hence $$\lvert \psi_n(\zeta_1)\rvert=\frac{\lvert h_n(\zeta_1)\rvert}{\rho_n^l}\longrightarrow\infty.$$ This implies that $\left\{\psi_n\right\}$ converges uniformly to $\infty$ in $D(\zeta_0,\delta_1)$ which contradicts the fact that $\psi(\zeta_0)=0.$ 

Now, by Lemma \ref{lem:2}, there exists a subsequence of $\left\{\psi_n\right\}$ (again denoted by $\left\{\psi_n\right\}$), a sequence of  points $\hat{\zeta_n}\longrightarrow\zeta_0$ and positive numbers $r_n\longrightarrow 0$ such that $$\phi_n(\xi)=\frac{\psi_n(\hat{\zeta_n}+r_n\xi)}{r_n^l}=\frac{h_n(\hat{\zeta_n}+r_n\xi)}{(\rho_n r_n)^l}=\frac{f_n(z_n+\rho_n\hat{\zeta_n}+\rho_n r_n\xi)-a_1}{(\rho_n r_n)^l}$$ converges locally uniformly to a non-constant entire function $\phi(\xi).$ It is easy to see that $\phi$ has zeros of multiplicity at least $l+1.$ By Weierstrass's Theorem, it follows that $$\left(\phi_n^{(l)}\right)^m(\xi)=\left(f^{(l)}\right)^m(z_n+\rho_n\hat{\zeta_n}+\rho_n r_n\xi)\longrightarrow\left(\phi^{(l)}\right)^m(\xi)$$ uniformly on compact subsets of $\mathbb{C}.$

Since $\#(S_1)=3,$ by Picard's Theorem, it follows that $\left(\phi^{(l)}\right)^m$ can omit at most one value from $S_1.$ Pick a non-zero value from $S_1,$ say $a_2$ such that $\left(\phi^{(l)}\right)^m(\xi)=a_2$ has a solution.

Let $\xi_0\in\mathbb{C}$ be such that $\left(\phi^{(l)}\right)^m(\xi_0)=a_2~(\neq 0)$ and $\phi(\xi_0)=a_2^*$ where $a_2,a_2^*\in\mathbb{C}.$ Clearly, $a_2^*\neq 0,$ otherwise $\left(\phi^{(l)}\right)^m(\xi_0)=0$ owing to the fact that zeros of $\phi$ have multiplicity at least $l+1.$ 

Since $\left(\phi^{(l)}\right)^m\not\equiv a_2,$ by Hurwitz's Theorem, there exists $\xi_n\longrightarrow\xi_0$ such that for sufficiently large $n,$ $$\left(f^{(l)}\right)^m(z_n+\rho_n\hat{\zeta_n}+\rho_n r_n\xi_n)=a_2~(\neq 0).$$ Also, $\phi_n(\xi_n)\longrightarrow\phi(\xi_0)=a_2^*~(\neq 0)$ implies that $f(z_n+\rho_n\hat{\zeta_n}+\rho_n r_n\xi_n)\longrightarrow a_1.$ However, $f(z_n+\rho_n\hat{\zeta_n}+\rho_n r_n\xi_n)\neq a_1.$ Indeed, if $f(z_n+\rho_n\hat{\zeta_n}+\rho_n r_n\xi_n)= a_1,$ then by hypothesis, $$f^{(i)}(z_n+\rho_n\hat{\zeta_n}+\rho_n r_n\xi_n)=0~ \mbox{for}~ 1\leq i\leq l$$ and hence $$\left(f^{(l)}\right)^m(z_n+\rho_n\hat{\zeta_n}+\rho_n r_n\xi_n)=0,$$ a contradiction to the fact that $\left(f^{(l)}\right)^m(z_n+\rho_n\hat{\zeta_n}+\rho_n r_n\xi_n)=a_2\neq 0.$ 

Thus $\left\{f(z_n+\rho_n\hat{\zeta_n}+\rho_n r_n\xi_n)\right\}$ is an infinite set and cannot be contained in the finite set $S_2,$ a contradiction.

\medskip

\textit{Case 2.} $h$ omits zero. Suppose that there exists $\zeta_0\in\mathbb{C}$ such that $h(\zeta_0)=b_1-a_1.$ Since $h\not\equiv b_1-a_1,$ by Hurwitz's Theorem, there exists $\zeta_n\longrightarrow\zeta_0$ such that for sufficiently large $n,$ $h_n(\zeta_n)=b_1-a_1.$ This implies that $$f(z_n+\rho_n\zeta_n)=b_1\in S_2.$$ By hypothesis, we have $$\left(f^{(l)}\right)^m(z_n+\rho_n\zeta_n)\in S_1.$$ Let  $$\left(f^{(l)}\right)^m(z_n+\rho_n\zeta_n)=a_3.$$ Then
\begin{eqnarray*} 
\left(h^{(l)}\right)^m(\zeta_0)=\lim\limits_{n\to\infty}\left(h_n^{(l)}\right)^m(\zeta_n)&=&\lim\limits_{n\to\infty}\rho_n^{lm}\left(f^{(l)}\right)^m(z_n+\rho_n\zeta_n)\\
&=&\lim\limits_{n\to\infty}\rho_n^{lm}~a_3=0.
\end{eqnarray*}
This shows that $$h(\zeta)=b_1-a_1\Rightarrow h^{(l)}(\zeta)=0.$$ Similarly, it can be shown that $$h(\zeta)=b_i-a_1\Rightarrow h^{(l)}(\zeta)=0,~i=2,3.$$
Now applying Lemma \ref{lem:7} to $h,$ we obtain 
\begin{eqnarray*}
3~T(r,h) &<& \left(l+1\right)\overline{N}(r,h)+ \overline{N}\left(r,\frac{1}{h}\right) + N\left(r,\frac{1}{h}\right)+S(r,h)\\
&=& \overline{N}(r,h)+l\overline{N}(r,h)+S(r,h)\\
&\leq& \left(\frac{1}{l+1}\right)N(r,h)+ N(r,h) + S(r,h)\\
&\leq& \left(\frac{1}{l+1}+1\right)T(r,h)+S(r,h).
\end{eqnarray*}
 which is a contradiction. Note that the case when $S_1=S_2$ can occur. If this is the case then, $$2~T(r,h)<\left(\frac{1}{l+1}+1\right)T(r,h)+S(r,h),$$ which is again a contradiction. Hence $f$ is a normal function.

\medskip

$(ii)$ If $a\notin S_1,$ then by the same argument as in {\it Case 2} of part $(i)$, one can easily deduce that $$h(\zeta)=a_i-a\Rightarrow h^{(l)}(\zeta)=0,~i=1,2,3.$$ Again, applying Lemma \ref{lem:7} to $h,$ we get 
\begin{eqnarray*}
3~T(r,h) &<& \left(l+1\right)\overline{N}(r,h)+ \overline{N}\left(r,\frac{1}{h}\right) + N\left(r,\frac{1}{h}\right)+S(r,h)\\
&\leq& \overline{N}(r,h)+l\overline{N}(r,h)+ \left(\frac{1}{l+2}\right)N\left(r,\frac{1}{h}\right)+N\left(r,\frac{1}{h}\right)+S(r,h)\\
&\leq& \left(\frac{1}{l+2}\right)N(r,h) +N(r,h)+ \left(\frac{1}{l+2}\right)N\left(r,\frac{1}{h}\right)+ N\left(r,\frac{1}{h}\right) + S(r,h)\\
&\leq& \left(\frac{2}{l+2}+2\right)T(r,h)+S(r,h).
\end{eqnarray*}
which is a contradiction.
Hence $f$ is a normal function.
\end{proof}

\section{Statements and Declaration}
{\bf Funding:} The authors declare that no funds, grants, or other support were received during the preparation of this manuscript.\\
\\
{\bf Competing Interests:} The authors declare that they have no conflict of interests. The authors have no relevant financial or non-financial interests to disclose.\\
\\
{\bf Author Contributions:} All authors contributed equally to this work.\\
\\
{\bf Data availability:} Data sharing not applicable to this article as no datasets were generated or analysed during the current study.

\end{document}